\documentclass[11pt]{article}

\usepackage[numbers,sort&compress]{natbib}
\usepackage[T1]{fontenc}
\usepackage{lmodern}
\usepackage[a4paper,margin=1in]{geometry}
\usepackage[hidelinks,hypertexnames=false]{hyperref}
\usepackage{microtype}
\usepackage{indentfirst}
\usepackage{amsmath,amssymb,mathtools,amsthm}
\usepackage{enumitem}
\usepackage{array,booktabs}
\usepackage{graphicx}
\usepackage{xcolor}
\usepackage{float}
\usepackage{tikz}
\usepackage{booktabs}
\usepackage{tabularx}
\usepackage{algorithm}
\usepackage{algpseudocode}
\makeatletter
\newenvironment{breakablealgorithm}
  {\par\addvspace{\intextsep}%
   \refstepcounter{algorithm}%
   \hrule height.8pt depth0pt \kern2pt
   \renewcommand{\caption}[2][\relax]{%
     {\bfseries\fname@algorithm~\thealgorithm}\ ##2\par
     \ifx\relax##1\relax
       \addcontentsline{loa}{algorithm}{\protect\numberline{\thealgorithm}##2}%
     \else
       \addcontentsline{loa}{algorithm}{\protect\numberline{\thealgorithm}##1}%
     \fi
     \kern2pt\hrule\kern2pt}%
   \noindent\ignorespaces}
  {\par\kern2pt\hrule\relax\par\addvspace{\intextsep}}
\makeatother
\usetikzlibrary{arrows.meta,calc,decorations.pathreplacing,positioning}
\graphicspath{{figures/}}
\newcommand{\PaperFigureWidth}{0.92\linewidth}
\newcommand{\PaperFigureHeight}{0.40\textheight}
\newsavebox{\PaperFigureBoxA}
\newsavebox{\PaperFigureBoxB}
\newcommand{\PaperFigure}[1]{%
  \sbox{\PaperFigureBoxA}{#1}%
  \ifdim\wd\PaperFigureBoxA>\PaperFigureWidth
    \sbox{\PaperFigureBoxB}{\resizebox{\PaperFigureWidth}{!}{\usebox{\PaperFigureBoxA}}}%
  \else
    \sbox{\PaperFigureBoxB}{\usebox{\PaperFigureBoxA}}%
  \fi
  \ifdim\dimexpr\ht\PaperFigureBoxB+\dp\PaperFigureBoxB\relax>\PaperFigureHeight
    \resizebox{!}{\PaperFigureHeight}{\usebox{\PaperFigureBoxB}}%
  \else
    \usebox{\PaperFigureBoxB}%
  \fi
}

\definecolor{affiliationblue}{RGB}{0,128,173}
\title{Two-disjoint-cycle-cover edge bipancyclicity of bipartite generalized hypercubes}
\author{\parbox{\dimexpr\textwidth-2\tabcolsep\relax}{\centering
{\Large\normalfont
Ke Lu\textsuperscript{\textcolor{affiliationblue}{a}},
Ruichao Niu\textsuperscript{\textcolor{affiliationblue}{a,}}{\color{affiliationblue}\thanks{Corresponding author.\\[2pt]
\emph{E-mail address:} Kelu@muc.edu.cn (\emph{K. Lu}), niuruichao@muc.edu.cn (\emph{R. Niu}).}}\endgraf}
\vspace{6pt}
{\small\normalfont\textsuperscript{a}\,\emph{School of Science, Minzu University of China, Beijing, 100081, China}\endgraf}}}
\date{}

\newtheorem{theorem}{Theorem}[section]
\newtheorem{lemma}[theorem]{Lemma}
\newtheorem{corollary}[theorem]{Corollary}

\theoremstyle{definition}
\newtheorem{definition}[theorem]{Definition}
\newtheorem{example}[theorem]{Example}
\theoremstyle{remark}
\newtheorem{remark}[theorem]{Remark}

\newcommand{\Z}{\mathbb Z}
\newcommand{\BoxProd}{\mathbin{\square}}

\newcommand{\symdiff}{\mathbin{\triangle}}
\newcommand{\modadd}[1]{\mathbin{+_{#1}}}
\DeclareMathOperator{\Cay}{Cay}
\DeclareMathOperator{\dir}{dir}

\setlist{nosep,leftmargin=*}
\allowdisplaybreaks

\begin{document}
\let\WriteBookmarks\relax

\maketitle

\begin{abstract}
Let \(G=C(d_1,\ldots,d_n)=F_1\BoxProd\cdots\BoxProd F_n\) be a bipartite generalized hypercube with
\(n\geq2\), all \(d_i\) even, and \(N=|V(G)|\geq8\), where
\(F_i=K_2\) when \(d_i=2\), and \(F_i=C_{d_i}\) when \(d_i\geq4\). We prove the following exact strengthening of
two-disjoint-cycle-cover vertex bipancyclicity. For every ordered pair of
independent edges \(e,f\in E(G)\) and every even integer
\(4\leq\ell\leq N-4\), the vertex set can be partitioned into two cycles
\(J_1,J_2\) of lengths \(\ell\) and \(N-\ell\), respectively, with
\(e\in E(J_1)\) and \(f\in E(J_2)\),
if and only if \(G\) is not isomorphic to any \(K_2\BoxProd C_{2p}\) with \(p\geq3\). The graph \(C(2,2)\cong C_4\) is treated separately: it has no 2-DCC.
Consequences that retain prescribed-edge information include ordinary edge bipancyclicity and
the even \(k\)-ary \(n\)-cube specialization.
\end{abstract}

\begin{center}\textbf{Keywords:} generalized hypercube; two-disjoint-cycle cover; edge bipancyclicity; Cartesian product; even torus; Cayley graph\end{center}
\noindent\textbf{2020 Mathematics Subject Classification:} 05C38, 05C45, 68M10

\section{Introduction}\label{sec:introduction}

Interconnection networks are commonly represented by graphs: processors or
switches are vertices and communication links are edges.  Under this model,
embedding a ring communication pattern becomes the problem of
finding a cycle; bandwidth-optimal ring-based collective communication gives
a concrete systems motivation for such embeddings \cite{PatarasukYuan2009}.
A Hamilton cycle supports a single spanning ring,
whereas cycles of selectable length allow a computation to occupy a
prescribed number of processors.  Two vertex-disjoint cycles that
together cover the network are more flexible still: they support two
simultaneous rings, isolate two traffic classes, and provide a natural
primary/secondary decomposition of the available processors.  Recent work
therefore studies Hamiltonian and pancyclic embeddings,
disjoint cycle covers, and fault-tolerant or prescribed-element variants
\cite{KungEtAl2021,NiuXuLai2021,WuSabir2023,HaoEtAl2024,XueLuQiao2025,LiuWang2025}.

We use the following standard graph-theoretic terminology.  All graphs are
finite, undirected and simple.  For a graph \(H\), its vertex set, edge set
and order are denoted by \(V(H)\), \(E(H)\) and \(|H|=|V(H)|\), respectively. Sometimes, we also let \(N\) denote the order of a graph. 
A \textit{path} is written \(P=x_0x_1\cdots x_t\), and \(P[x_i,x_j]\)
denotes the subpath from \(x_i\) to \(x_j\) in the displayed orientation.  A
\textit{cycle} is written \(C=x_0x_1\cdots x_{s-1}x_0\); its length, also
denoted by \(|C|\), is the number of its edges.  A path or cycle is
\textit{Hamiltonian} if it contains every vertex of the graph.  Two
edges are \textit{independent} if they have no common end. If \(X\) and \(Y\) are subgraphs, then
\(X\symdiff Y\) denotes the subgraph with edge set
\(E(X)\symdiff E(Y)\).

A graph of order \(m\) is \textit{pancyclic} if it contains a cycle of every
length from \(3\) to \(m\); it is vertex-pancyclic or
edge-pancyclic if every vertex or, respectively, every edge lies on
a cycle of each such length.  Since all cycles in a bipartite graph have even
length, the corresponding terms \textit{bipancyclic},
vertex-bipancyclic and edge-bipancyclic require every even
length from \(4\) to \(m\).  These properties belong to a broader family of
Hamiltonian-type extensions that includes Hamilton-connectedness,
Hamilton-laceability, panconnectivity and prescribed cycle
embeddings \cite{KungEtAl2021,LiuWang2025}.

Recent studies use a cycle-cover refinement in which every vertex is used
exactly once \cite{KungEtAl2021,NiuXuLai2021,HaoEtAl2024,XueLuQiao2025}.  A
two-disjoint-cycle cover (briefly, a \textrm{2-DCC}) of \(H\) is a
pair \((J_1,J_2)\) of cycles satisfying
\(V(J_1)\cap V(J_2)=\varnothing\) and
\(V(J_1)\cup V(J_2)=V(H)\).
For integers \(r_1\leq r_2\leq m/2\), a bipartite graph \(H\) is
\textit{2-DCC \([r_1,r_2]\)-bipancyclic} if it has such a cover with cycle
lengths \(\ell\) and \(m-\ell\) for every even
\(r_1\leq\ell\leq r_2\).  Vertex-prescribed versions require specified
vertices to occur in different cycles \cite{KungEtAl2021,NiuXuLai2021}.
The present paper imposes the strictly stronger requirement that two
specified edges---and hence both endpoints of each edge---are retained in
their designated cycles. We use “prescribed” throughout.

\begin{definition}\label{def:edge-2dcc}
Let \(H\) be a bipartite graph of order \(m\), and let
\(4\leq r_1\leq r_2\leq m/2\).  The graph \(H\) is
\textit{2-DCC edge \([r_1,r_2]\)-bipancyclic} if, for every ordered pair of
independent edges \((e,f)\) and every even integer
\(r_1\leq\ell\leq r_2\), there is a 2-DCC \((J_1,J_2)\) such that
\(|J_1|=\ell\), \(|J_2|=m-\ell\),
\(e\in E(J_1)\), and \(f\in E(J_2)\).
\end{definition}
For \(C(2,2)\cong C_4\), the interval \([4,m/2]\) is empty; the order-four graph is handled separately below.

Niu, Xu and Lai characterized 2-DCC vertex bipancyclicity for bipartite
generalized hypercubes \cite{NiuXuLai2021}; here we address the edge-prescribed
version for the same graph class.  Our main result, Theorem~\ref{thm:main},
states that \(G\) is 2-DCC edge \([4,N/2]\)-bipancyclic if and only if it is
not isomorphic to a member of the explicit exceptional family.  Related recent work studies spanning
disjoint cycles with prescribed edges in hypercubes \cite{WuSabir2023},
2-DCC pancyclicity in data-center networks \cite{HaoEtAl2024},
2-DCC edge/vertex bipancyclicity of star graphs \cite{XueLuQiao2025}, and
fault-tolerant edge-bipancyclicity of hypercubes \cite{LiuWang2025}.  Together
these results show that edge prescription and disjoint spanning embeddings
form an active and distinct line of research.

The remainder of the paper is organized as follows.  Section~\ref{sec:prelim}
reviews some basic concepts. Section~\ref{sec:main} gives the obstructions and proves the main theorem.
Section~\ref{sec:applications} derives edge-specific consequences for ordinary
edge bipancyclicity and even \(k\)-ary \(n\)-cubes. Section~\ref{sec:conclusion} concludes the paper.

\section{Preliminaries}\label{sec:prelim}

\subsection{Cartesian products}\label{subsec:cartesian}

For every integer \(m\geq2\), we identify \(\Z_m\) with
\(\{0,1,\ldots,m-1\}\) and write \(\modadd{m}\) for addition modulo
\(m\). Thus \(a\modadd{m}b\) is the unique element of \(\Z_m\) congruent
to \(a+b\pmod m\).

For graphs \(A\) and \(B\), their \emph{Cartesian product}
\(A\BoxProd B\) has vertex set \(V(A)\times V(B)\); vertices \((a,b)\) and
\((a',b')\) are adjacent if and only if either
\(a=a'\) and \(bb'\in E(B)\), or \(b=b'\) and \(aa'\in E(A)\).
The operation is associative and commutative up to isomorphism.  Specifically, if
\(C_s=x_0x_1\cdots x_{s-1}x_0\) is a cycle and
\(P_t=y_0y_1\cdots y_{t-1}\) is a path on \(t\) vertices, we write
\(x_j^i=(x_j,y_i)\) and
\(X^i=x_0^ix_1^i\cdots x_{s-1}^ix_0^i\).
When a cyclic factor is needed, we write \(C_t\) instead.  Thus \(X^i\)
is the \(i\)th \(X\)-layer; cyclic shifts of its vertex
indices are written using \(\modadd{s}\).

Figure~\ref{fig:cartesian-product-layers} illustrates this notation for
\(C(4,3,2)\).

\begin{figure}[htbp]
\centering
\PaperFigure{%
\def\FigPx{0.30bp}%
\begin{tikzpicture}[
  x=\FigPx,y=-\FigPx,
  vertex/.style={circle,fill=black,inner sep=0pt,minimum size=5.4bp},
  solid/.style={draw=black,line width=.88bp},
  dashed/.style={draw=black,line width=.82bp,
                 dash pattern=on 4.0bp off 3.0bp},
  coordlabel/.style={font=\fontsize{6.5}{7}\selectfont,
                    inner sep=.15pt,fill=white}
]
\def\colA{78.75}\def\colB{197.50}\def\colC{311.85}\def\colD{420.78}%
\def\rowA{59.00}\def\rowB{135.55}\def\rowC{213.83}%
\def\rowD{268.50}\def\rowE{345.45}\def\rowF{423.32}%
\def\dxcol{55.75}%
\foreach \c/\x in {0/\colA,1/\colB,2/\colC,3/\colD}{%
  \coordinate (v\c00) at (\x,\rowA);
  \coordinate (v\c20) at (\x+\dxcol,\rowB);
  \coordinate (v\c10) at (\x,\rowC);
  \coordinate (v\c01) at (\x,\rowD);
  \coordinate (v\c21) at (\x+\dxcol,\rowE);
  \coordinate (v\c11) at (\x,\rowF);
}
\foreach \c in {0,1,2,3}{%
  \draw[solid] (v\c00)--(v\c20)--(v\c10)--cycle;
  \draw[solid] (v\c01)--(v\c21)--(v\c11)--cycle;
  \draw[solid] (v\c20)--(v\c21);
  \draw[solid] (v\c00)
    .. controls +(-57.333,58.062) and +(-57.333,-58.062) .. (v\c01);
  \draw[solid] (v\c10)
    .. controls +(-57.333,58.062) and +(-57.333,-58.062) .. (v\c11);
}
\foreach \a/\b in {0/1,1/2,2/3}{%
  \foreach \r in {00,20,10,01,21,11}{%
    \draw[dashed] (v\a\r)--(v\b\r);%
  }
}
\draw[dashed] (v000)
  .. controls +(108.560,-49.853) and +(-108.560,-49.853) .. (v300);
\draw[dashed] (v020)
  .. controls +(109.125,-47.200) and +(-109.125,-47.200) .. (v320);
\draw[dashed] (v010)
  .. controls +(109.053,-47.547) and +(-109.053,-47.547) .. (v310);
\draw[dashed] (v001)
  .. controls +(109.808,-43.773) and +(-109.808,-43.773) .. (v301);
\draw[dashed] (v021)
  .. controls +(109.915,-43.213) and +(-109.915,-43.213) .. (v321);
\draw[dashed] (v011)
  .. controls +(109.705,-44.307) and +(-109.705,-44.307) .. (v311);
\foreach \c in {0,1,2,3}{%
  \foreach \r in {00,20,10,01,21,11}{\node[vertex] at (v\c\r) {};}
}
\foreach \n/\dx/\dy in {%
  000/-2.25/-18.50,100/2.00/-18.50,200/-2.35/-18.50,300/25.72/-14.50,
  020/-26.00/-0.05,120/-24.75/3.95,220/-26.50/5.45,320/25.97/-1.05,
  010/-25.75/-2.83,110/24.50/6.67,210/27.15/5.67,310/28.72/-2.83,
  001/-29.25/2.50,101/24.00/-10.00,201/26.15/-7.85,301/27.72/1.00,
  021/-25.00/1.05,121/-25.75/6.55,221/-25.00/7.55,321/27.97/-0.45,
  011/-30.25/4.18,111/-2.50/18.18,211/-6.35/21.18,311/27.22/7.18}{%
  \node[coordlabel] at ($(v\n)+(\dx,\dy)$) {\n};%
}
\end{tikzpicture}%
}
\caption{The illustration of $C(4,3,2)$.}
\label{fig:cartesian-product-layers}
\end{figure}

\subsection{Generalized hypercubes and \(k\)-ary \(n\)-cubes}
\label{subsec:generalized}

Let \(n\geq1\) and \(d_1,\ldots,d_n\geq2\).  Define
\begin{equation}\label{eq:generalized-hypercube}
 C(d_1,\ldots,d_n)=F_1\BoxProd\cdots\BoxProd F_n,
 \qquad
 F_i=\begin{cases}
 K_2,&d_i=2,\\
 C_{d_i},&d_i\geq3.
 \end{cases}
\end{equation}
Equivalently, its vertices can also be defined as
\(u=(u_1,\ldots,u_n)\in\prod_{i=1}^n\Z_{d_i}\), and \(u,v\) are adjacent
when they agree in all but one coordinate \(i\), in which
\(v_i-u_i\equiv\pm1\pmod {d_i}\).  When \(d_i=2\), the two signs describe
the same simple edge, which is why the factor in
\eqref{eq:generalized-hypercube} is \(K_2\), not a doubled \(2\)-cycle.
The graph is bipartite exactly when all \(d_i\) are even.

In particular, for \textit{\(k,n\geq2\)}, the \(k\)-ary \(n\)-cube is \(Q_n^k=C(\underbrace{k,k,\ldots,k}_{n\text{ factors}})\). More specifically, the \(n\)-dimensional hypercube \(Q_n\) has vertex set \(\{0,1\}^n\), with
two binary strings adjacent when they differ in exactly one coordinate.
Equivalently,
\(Q_n=\underbrace{K_2\BoxProd\cdots\BoxProd K_2}_{n\text{ factors}}\).

These Cartesian-product graphs also arise as interconnection-network
topologies: \(k\)-ary \(n\)-cubes have long been studied as direct networks
\cite{Dally1990}, and ordinary low-dimensional tori occur in deployed machine-learning
supercomputers.  TPUv2 connects 256 chips as a \(16\times16\)
two-dimensional torus, which under the undirected simple-link abstraction has
adjacency graph \(C(16,16)\) \cite{JouppiEtAl2020}.

\subsection{Cayley graphs}\label{subsec:cayley}

For a finite abelian group \(\Gamma\), written additively, and an inverse-closed set
\(S\subseteq\Gamma\setminus\{0\}\), the Cayley graph
\(\Cay(\Gamma,S)\) has vertex set \(\Gamma\), with \(x\) adjacent to \(x+s\)
for \(s\in S\).  Let \(\gamma_i\) denote the canonical generator in the sense of \cite{DoughertyFaber2003} of the
\(i\)th factor of \(\prod_{i=1}^n\Z_{d_i}\).  Then
\begin{equation}\label{eq:cayley-representation}
 C(d_1,\ldots,d_n)=
 \Cay\!\left(\prod_{i=1}^n\Z_{d_i},
 \{\gamma_i:d_i=2\}\cup
 \{\pm\gamma_i:d_i\geq3\}\right)
\end{equation}
where \(\Z_{d_i}=\{0,1,\ldots,d_i-1\}\) is the cyclic group under
\(\modadd{d_i}\).

We record the following lemma and it is central to the proof of Lemma~\ref{lem:spanning-torus}, which establishes the main theorem.

\begin{lemma}[Corollary 6.1 of \cite{Alspach1981}]
\label{lem:edge-hamiltonian}
Every edge of a connected Cayley graph of an abelian group of order at least
3 is contained in a Hamiltonian cycle.
\end{lemma}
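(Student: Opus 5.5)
We would prove the statement by induction along a chain of subgroups, extending a Hamiltonian cycle through cyclic cosets at each step. This is essentially the classical argument behind Corollary~6.1 of \cite{Alspach1981}.

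Let \(\Gamma\) be abelian, \(S\) an inverse-closed generating set, and \(e=\{x,x+s\}\) the prescribed edge. By vertex-transitivity (translation is an automorphism) we may assume \(x=0\). Choose generators \(s_1=s,s_2,\ldots,s_k\in S\) such that, with \(\Gamma_i=\langle s_1,\ldots,s_i\rangle\), the chain \(\Gamma_1<\Gamma_2<\cdots<\Gamma_k=\Gamma\) is strictly increasing. Each quotient \(\Gamma_i/\Gamma_{i-1}\) is then cyclic of order \(q_i\geq2\), generated by the image of \(s_i\). Write \(G_i=\Cay(\Gamma_i,S\cap\Gamma_i)\), which contains \(\Cay(\Gamma_i,\{\pm s_1,\ldots,\pm s_i\})\) as a spanning subgraph. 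We prove by induction on \(i\) that \(G_i\) has a Hamiltonian cycle through \(e\) whenever \(|\Gamma_i|\geq3\).

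\emph{Base.} If \(|s|\geq3\), then \(0,s,2s,\ldots\) is a Hamiltonian cycle of \(G_1\) through \(e\). If \(|s|=2\), the graph \(G_1\) is a single edge. We then pass directly to \(\Gamma_2\), which is a union of the \(q_2\) cosets \(\{js_2,\,js_2+s\}\). The ladder formed by these cosets has the Hamiltonian cycle that runs up the side \(js_2\), crosses the rung \(e\) (or a rung attached to it), and runs back along the side \(js_2+s\). Using the wrap-around edge from the last coset back to the first when needed, this gives a Hamiltonian cycle containing \(e\) as a rung.

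\emph{Inductive step.} Let \(C=c_0c_1\cdots c_{r-1}c_0\) be a Hamiltonian cycle of \(G_{i-1}\) through \(e\), with \(r\geq3\). Then \(\Gamma_i\) is the disjoint union of the layers \(L_j=\Gamma_{i-1}+js_i\) for \(0\leq j<q=q_i\). Each layer carries the translated cycle \(C+js_i\), and \(c+js_i\) is adjacent to \(c+(j+1)s_i\). Hence \(G_i\) contains the prism \(C\BoxProd P_q\), with \(e\) lying in layer \(0\).
\begin{itemize}
\item Choose an edge \(c_0c_1\neq e\) of \(C\), possible since \(r\geq3\).
\item Traverse \(C-c_0c_1\) in layer \(0\) from \(c_1\) to \(c_0\); this keeps \(e\).
\item Climb the column of \(c_0\) to layer \(q-1\).
\item Cover the grid \(\{c_1,\ldots,c_{r-1}\}\times\{1,\ldots,q-1\}\) by a Hamiltonian path from \(c_{r-1}^{q-1}\) to \(c_1^{1}\), and close the cycle with the vertical edge \(c_1^1c_1^0\).
\end{itemize}
For the grid path, if \(q-1\) is odd we snake row by row from layer \(q-1\) down to layer \(1\). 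If \(r-1\) is odd we snake column by column from \(c_{r-1}\) to \(c_1\). Either way the path ends at \(c_1^1\) as required.

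\emph{Main obstacle.} The remaining case is \(r\) and \(q\) both odd, where the grid has the wrong parity for such a path. Here we would use the extra edges of \(G_i\) not present in the prism: the wrap-around adjacencies \(c+(q-1)s_i\sim c+qs_i\), where \(qs_i\in\Gamma_{i-1}\), so these edges join layer \(q-1\) back into layer \(0\) with a shift. We would also have freedom in choosing which edge \(c_0c_1\) of \(C\) to delete. The first attempt would be to route the return from layer \(q-1\) to layer \(0\) through such a wrap edge, so that layer \(0\) is entered at a different vertex and the parity defect is absorbed. If that fails, we would reorder the generators so that at most one step of the chain has odd \(r\) and odd \(q\) simultaneously, and treat that single step by hand.
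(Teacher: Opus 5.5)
The paper does not prove this lemma; it quotes it from Alspach's survey. The usual derivation there goes through the Chen--Quimpo theorem: a connected abelian Cayley graph of degree at least 3 is Hamilton-connected, or Hamilton-laceable if bipartite, so a Hamilton \(u\)--\(v\) path closes up with the edge \(uv\). Your subgroup-chain induction is a legitimate, more elementary independent route. Your base case and the inductive steps with \(r\) or \(q\) even are correct.

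\textbf{The gap.} The inductive step is incomplete in exactly the case you flag. When \(r\) and \(q\) are both odd, the grid \(P_{r-1}\BoxProd P_{q-1}\) has two even sides. Its opposite corners then lie in the same colour class of a balanced bipartite graph, so no Hamilton path joins them, and your route fails outright. Neither proposed remedy rescues it:
\begin{itemize}
\item \emph{Reordering generators.} If \(|\Gamma|\) is odd, then by Lagrange every \(|\Gamma_{i-1}|\) and every index \(q_i\) is odd, so \emph{every} step \(i\geq2\) is of the bad type. The simplest example is \(\Cay(\Z_3\times\Z_3,\{\pm\gamma_1,\pm\gamma_2\})\cong C_3\BoxProd C_3\), whose only inductive step is bad. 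Reordering does help when \(|\Gamma|\) is even: once \(|\Gamma_1|\) is odd, some element of \(S\) has even order modulo \(\Gamma_1\), and choosing it as \(s_2\) makes every later \(r\) even. That covers the paper's bipartite application, but not the lemma as stated.
\item \emph{Wrap-around edges.} The shift \(qs_i\in\Gamma_{i-1}\) has no controlled relation to \(C\). In \(C_3\BoxProd C_3\) it is \(3\gamma_2=0\), so the wrap edges return to the \emph{same} vertex of layer \(0\). Entering layer \(0\) at a different vertex is therefore not available in general.
\end{itemize}

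\textbf{The fix.} The parity defect comes from your route itself: forcing the upper layers into a bipartite grid throws away the closing edges \(c_{r-1}c_0\) of the odd cycle \(C\) there. Replace the route with the square switching the paper uses in Lemma~\ref{lem:cylinder}. Prove by induction on \(q\) that \(C\BoxProd P_q\) has a Hamilton cycle through any prescribed edge of layer \(0\).
\begin{itemize}
\item For \(q=1\), take \(C\) itself.
\item For \(q\geq2\), pick an edge \(h=c_0c_1\neq e\) of \(C\). Let \(Z\) be a Hamilton cycle of the sub-prism on layers \(1,\ldots,q-1\) through \(h^1\); it exists by induction, with layer \(1\) as the boundary layer. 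Put \(H=(C^0\cup Z)\symdiff c_0^0c_1^0c_1^1c_0^1c_0^0\).
\end{itemize}
The symmetric difference deletes \(h^0\) and \(h^1\) and inserts the two vertical edges. This merges the two cycles into one Hamilton cycle of the prism, and \(e\) survives because \(e\neq h\). Only \(r\geq3\) is used, so all parities are handled uniformly. With this step in place, your induction proves the lemma.
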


\section{Two-disjoint-cycle-cover edge bipancyclicity}
\label{sec:main}

The following bipartite generalized hypercubes are obstructions.
\begin{example}\label{ex:obstructions}
\begin{enumerate}[label=\textnormal{(\roman*)}]
\item The graph \(C(2,2)=K_2\BoxProd K_2\cong C_4\) has only four vertices
and cannot contain two vertex-disjoint cycles.  Thus no 2-DCC property is
possible in this graph.
\item Let \(G=C_{2p}\BoxProd K_2 \), where \(p\geq3\).  Write the two
\(C_{2p}\)-layers as
\(x_0^0x_1^0\cdots x_{2p-1}^0x_0^0\) and
\(x_0^1x_1^1\cdots x_{2p-1}^1x_0^1\), and prescribe the independent edges
\(e=x_0^0x_1^0\) and \(f=x_0^1x_1^1\).
Because \(2p\geq6\), the factor \(C_{2p}\) contains no four-cycle.  Hence the unique
four-cycle of \(G\) containing \(e\) is
\(x_0^0x_1^0x_1^1x_0^1x_0^0\),
and it also contains \(f\).  A four-cycle through \(e\) therefore cannot be
vertex-disjoint from any cycle through \(f\).  The 2-DCC edge property fails
at length four.
\end{enumerate}
\end{example}

The obstruction in Example~\ref{ex:obstructions}(ii) is shown in
Figure~\ref{fig:prism-obstruction}.

\begin{figure}[htbp]
\centering
\PaperFigure{%
\resizebox{0.6\linewidth}{!}{%
\begin{tikzpicture}[
  x=1bp,y=-1bp,
  line cap=round,line join=round
]
  \useasboundingbox (0,0) rectangle (1705,922);

  \coordinate (T0) at (216,273);
  \coordinate (T1) at (451,273);
  \coordinate (T2) at (684,273);
  \coordinate (TR0) at (1217,273);
  \coordinate (TR1) at (1452,273);
  \coordinate (B0) at (216,647);
  \coordinate (B1) at (451,647);
  \coordinate (B2) at (684,647);
  \coordinate (BR0) at (1217,647);
  \coordinate (BR1) at (1452,647);

  \draw[black,line width=4bp]
    (T0) .. controls (166,211) and (177,145) .. (255,115)
         .. controls (465,36) and (1205,42) .. (1437,115)
         .. controls (1518,145) and (1535,211) .. (TR1);
  \draw[black,line width=4bp]
    (B0) .. controls (166,709) and (177,775) .. (255,805)
         .. controls (465,886) and (1205,880) .. (1437,805)
         .. controls (1518,775) and (1535,709) .. (BR1);

  \draw[black,line width=4bp] (T1)--(T2) (TR0)--(TR1)
                              (B1)--(B2) (BR0)--(BR1)
                              (T2)--(B2) (TR0)--(BR0) (TR1)--(BR1);
  \draw[blue,line width=6bp] (T0)--(B0) (T1)--(B1);
  \draw[red,line width=6bp] (T0)--(T1) (B0)--(B1);

  \foreach \xx in {914,946,978} {
    \fill[black] (\xx,273) circle (4bp);
    \fill[black] (\xx,647) circle (4bp);
  }
  \draw[black,line width=4bp] (946,289)--(946,631);

  \foreach \P in {T0,T1,T2,TR0,TR1,B0,B1,B2,BR0,BR1} {
    \fill[black] (\P) circle (14bp);
  }

  \node[font=\fontsize{44bp}{48bp}\selectfont,anchor=south east]
    at (240,238) {$x_0^0$};
  \node[font=\fontsize{44bp}{48bp}\selectfont,anchor=south]
    at (451,238) {$x_1^0$};
  \node[font=\fontsize{44bp}{48bp}\selectfont,anchor=south]
    at (684,238) {$x_2^0$};
  \node[font=\fontsize{44bp}{48bp}\selectfont,anchor=south]
    at (1217,238) {$x_{2p-2}^0$};
  \node[font=\fontsize{44bp}{48bp}\selectfont,anchor=south west]
    at (1396,238) {$x_{2p-1}^0$};

  \node[font=\fontsize{44bp}{48bp}\selectfont,anchor=north east]
    at (240,682) {$x_0^1$};
  \node[font=\fontsize{44bp}{48bp}\selectfont,anchor=north]
    at (451,682) {$x_1^1$};
  \node[font=\fontsize{44bp}{48bp}\selectfont,anchor=north]
    at (684,682) {$x_2^1$};
  \node[font=\fontsize{44bp}{48bp}\selectfont,anchor=north]
    at (1217,682) {$x_{2p-2}^1$};
  \node[font=\fontsize{44bp}{48bp}\selectfont,anchor=north west]
    at (1396,682) {$x_{2p-1}^1$};

  \node[font=\fontsize{46bp}{48bp}\selectfont,text=red,anchor=south]
    at (334,247) {$e$};
  \node[font=\fontsize{46bp}{48bp}\selectfont,text=red,anchor=north]
    at (334,673) {$f$};
\end{tikzpicture}%
}%
}
\caption{The obstruction in Example~\ref{ex:obstructions}(ii): $C_{2p}\BoxProd K_2$ for $p\ge3$.}
\label{fig:prism-obstruction}
\end{figure}

\begin{theorem}\label{thm:main}
Let \(n\ge2\), let every \(d_i\ge2\) be even, and put
\(G=C(d_1,\ldots,d_n)\) and \(N=|V(G)|\ge8\). Then \(G\) is
2-DCC edge \([4,N/2]\)-bipancyclic if and only if
\(G\not\cong K_2\BoxProd C_{2p}\) for every integer \(p\ge3\).
The remaining order-four graph \(C(2,2)\cong C_4\) has no 2-DCC.
\end{theorem}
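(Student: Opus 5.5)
The ``only if'' direction and the statement about \(C(2,2)\) are exactly Example~\ref{ex:obstructions}. So the work is the ``if'' direction, which I would prove by induction on \(n\).

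\textbf{Setup.} Write \(G=H\BoxProd F_n\), where \(H=C(d_1,\ldots,d_{n-1})\). Order the factors so that \(F_n\) is a cycle \(C_t\) (\(t\ge4\)) whenever some \(d_i\ge4\); otherwise \(G=Q_n\). View \(G\) as \(t\) copies \(H^0,\ldots,H^{t-1}\) of \(H\) arranged along \(F_n\), where \(t=2\) if \(F_n=K_2\). The central tool, which I expect to be what Lemma~\ref{lem:spanning-torus} provides, comes from Lemma~\ref{lem:edge-hamiltonian}. By \eqref{eq:cayley-representation}, every connected subgraph of the form ``union of consecutive layers'' is again a generalized hypercube, hence an abelian Cayley graph. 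Therefore every edge of such a block lies on a Hamiltonian cycle of that block. The layers \(0,\ldots,s-1\) form \(H\BoxProd P_s\), which is not Cayley in general. I would handle this by working with \(H\) itself together with ladder-type merges.

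\textbf{Main construction.} Fix independent edges \(e,f\) and an even \(\ell\) with \(4\le\ell\le N/2\). By symmetry (vertex-transitivity and the automorphisms of each factor) normalize \(e\), and split into cases according to whether \(e\) and \(f\) lie in a common layer, in different layers, or are \(F_n\)-edges joining layers. In the generic case, build \(J_1\) as follows. Choose a Hamiltonian cycle \(Z\) of \(H^i\) through the layer-projection of \(e\) (Lemma~\ref{lem:edge-hamiltonian}). Select a segment of \(Z\) together with its copy in \(H^{i+1}\) and join them by two rungs; this gives cycles of every even length \(4\le\ell\le 2|H|\) that contain \(e\) (or contain \(e\) as a rung). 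Longer lengths, up to \(N/2\), are obtained by also absorbing whole further layers: replace an edge \(uv\) of the current cycle lying in \(H^j\) by \(u\,u'\,P\,v'\,v\), where \(P\) is a Hamiltonian path of \(H^{j+1}\) from \(u'\) to \(v'\) obtained from a Hamiltonian cycle through \(u'v'\). The complement must then carry a Hamiltonian cycle through \(f\). I would arrange the complement as a ``staircase'' region: full layers together with the residual parts of two layers that are ladder-complementary to the part of \(J_1\). Its Hamiltonian cycle is obtained by the symmetric-difference trick \(X\symdiff Y\): merge Hamiltonian cycles of the individual full layers through 4-cycles \(x^jy^jy^{j+1}x^{j+1}\), choosing the merge edges so as to avoid \(f\). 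Since \(f\) is only one edge and each layer has many edges available for merging, this is possible once \(|H|\ge4\).

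\textbf{Base and small cases.} The base is \(n=2\), i.e.\ \(C(d_1,d_2)\). Here \(C(2,2)\) is excluded and \(K_2\BoxProd C_{2p}\) is an obstruction, so the remaining cases are \(C(4,2)\cong Q_3\) (treated directly by a short check) and the tori \(C(2a,2b)\) with \(a,b\ge2\). For the tori the layers are cycles and the construction above becomes explicit. The delicate case is \(\ell=4\) with \(e,f\) close together. In a torus with both factors cycles there are at least two 4-cycles through \(e\), namely one on each side in the other dimension (and more if a factor is \(C_4\)), so one can always choose one that avoids \(f\) and whose removal leaves a Hamiltonian-connected-enough remainder. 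For \(n\ge3\) the extra dimension likewise supplies a 4-cycle through \(e\) avoiding \(f\); this is exactly why only \(K_2\BoxProd C_{2p}\) fails.

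\textbf{Main obstacle.} I expect the hardest part to be guaranteeing that the complement of \(J_1\) is Hamiltonian with \(f\) on its cycle in every relative position of \(e\) and \(f\). The worst configurations are \(f\) lying on a rung or inside the two partial layers, and small \(\ell\). I would first attempt to make all positions uniform by re-choosing which dimension plays the role of \(F_n\): pick it so that neither \(e\) nor \(f\) is an \(F_n\)-edge, when possible. The residual cases would be finished by local switching along 4-cycles.
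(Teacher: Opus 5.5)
The ``only if'' part and the \(C(2,2)\) remark are fine. The ``if'' direction, however, has a genuine gap, located exactly at what you call the main obstacle. In your layered picture \(G=H\BoxProd F_n\), the only Hamiltonicity tool for a layer is Lemma~\ref{lem:edge-hamiltonian}, which controls a \emph{single} prescribed edge. Your \(J_2\) is built from the residual arc of \(Z\) (chosen through the projection of \(e\)) and its copy, glued to Hamilton cycles of full layers along squares \(x^jy^jy^{j+1}x^{j+1}\). Each such square requires the same projected edge to lie on the cycles chosen in two adjacent layers. So if \(f\) is a layer edge in a split layer \(H^i\) or \(H^{i+1}\), you need a Hamilton cycle of \(H\) through the projections of both \(e\) and \(f\), in a prescribed relative position. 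If \(f\) lies in a full layer, that layer's cycle must contain the projection of \(f\) and also share an edge with its neighbours' cycles. A one-edge result gives neither. If \(f\) is the copy of \(e\) in an adjacent layer, no segment-plus-copy ladder separates them at all. For \(G=Q_n\) with \(n\ge4\) you have \(t=2\) and \(\ell\le|H|\), so there are no full layers. Your construction then leaves \(J_2\) equal to the residual ladder of \(Z\), which contains \(f\) only if \(Z\) passes through both projections. Your proposed fix, choosing \(F_n\) so that neither \(e\) nor \(f\) is an \(F_n\)-edge, goes the wrong way: it pushes both marked edges into the layers, exactly where you only have single-edge control. ``Local switching along 4-cycles'' and a 4-cycle through \(e\) ``whose removal leaves a Hamiltonian-connected-enough remainder'' are not arguments. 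Avoiding \(f\) is not enough; you must produce a Hamilton cycle of the complement through \(f\). Note also that the induction hypothesis is never used.

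The missing idea is to split the coordinates \emph{adaptively}, according to \(\dir(e)\) and \(\dir(f)\). The marked directions go into a small factor with an explicit Hamilton cycle, and at most one marked direction is left in the large factor. This is Lemma~\ref{lem:spanning-torus}. Take \(I=\{p\}\) with \(d_p\ge4\) and \(D_I=C_{d_p}\); or \(I=\{i,j\}\) with \(D_I=C_4\); or \(I=\{i,h\}\) with a Hamilton cycle of \(F_i\BoxProd F_h\) containing every direction-\(i\) edge. A single application of Lemma~\ref{lem:edge-hamiltonian} in \(G_J\) then gives \(D_J\), and \(D_I\BoxProd D_J\) is a spanning even torus containing \(e\) and \(f\). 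In a torus every layer is a cycle, so the two-edge problem disappears. Even so, the torus case is not automatic. The paper needs a normalization ensuring no row contains ends of both edges, and a row reflection moving \(f\) into rows \(a+1,\ldots,r-1\). It also needs Lemma~\ref{lem:cylinder}: a cylinder Hamilton cycle containing an alternating boundary matching \emph{and} one arbitrary prescribed edge. Finally, switching squares \(R_j^{\pm}\) split row \(a\) between \(J_1\) and \(J_2\), with the starting column \(c\) chosen so that neither \(e\) nor \(f\) is deleted.
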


\begin{lemma}\label{lem:cylinder}
Let \(s\geq4\) be even and \(t\geq1\). We define
\(M_1^0:=\{x_{2q}^0x_{2q+1}^0:0\leq q<s/2\}\) and
\(M_2^0:=\{x_{2q+1}^0x_{(2q+1)\modadd{s}1}^0:0\leq q<s/2\}\).
For each of the two matchings \(M_1^0\) and \(M_2^0\), and for every edge
\(g\in E(C_s\BoxProd P_t)\), there is a Hamilton cycle \(H\) that contains
the selected matching and \(g\).
\end{lemma}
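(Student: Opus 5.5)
We prove the lemma by induction on \(t\), handling only \(M_1^0\) directly. The rotation \(x_j^i\mapsto x_{j\modadd{s}1}^i\) is an automorphism of \(C_s\BoxProd P_t\). It maps \(M_1^0\) onto \(M_2^0\) and permutes the edges \(g\). So the statement for \(M_2^0\) and an edge \(g\) follows from the statement for \(M_1^0\) and the preimage of \(g\). Applying the same symmetry in layer \(1\), we may also use the statement with the matching \(M_2^1\) in layer \(1\) of a copy of \(C_s\BoxProd P_{t-1}\).

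\emph{Base case \(t=1\).} The graph is the cycle \(X^0\) itself. Taking \(H=X^0\) gives a Hamilton cycle containing every edge, in particular both matchings and \(g\).

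\emph{Inductive step \(t\ge2\).} Let \(G'\cong C_s\BoxProd P_{t-1}\) be the subgraph induced by layers \(1,\dots,t-1\). By induction, applied with the matching \(M_2^1\), there is a Hamilton cycle \(H'\) of \(G'\) containing every edge of \(M_2^1\) and a chosen edge \(g'\). Take \(g'=g\) if \(g\in E(G')\), and \(g'\) arbitrary otherwise. Next pick an edge \(x_a^1x_{a+1}^1\in M_2^1\), so \(a\) is odd and indices are modulo \(s\). Define
\[
H=\bigl(H'-x_a^1x_{a+1}^1\bigr)\cup\{x_a^1x_a^0,\;x_{a+1}^1x_{a+1}^0\}\cup\bigl(X^0-x_a^0x_{a+1}^0\bigr).
\]
Here \(X^0-x_a^0x_{a+1}^0\) is a Hamilton path of layer \(0\) from \(x_a^0\) to \(x_{a+1}^0\). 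Hence \(H\) is a Hamilton cycle of \(C_s\BoxProd P_t\). The deleted layer-\(0\) edge \(x_a^0x_{a+1}^0\) lies in \(M_2^0\), so every edge of \(M_1^0\) survives in \(H\).

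It remains to choose \(a\) so that \(g\in E(H)\). This is the only place needing care, and it splits into four cases.
\begin{enumerate}
\item If \(g\in E(G')\), then \(g=g'\in E(H')\). Because \(s/2\ge2\), some edge of \(M_2^1\) differs from \(g\); choose \(a\) so that \(x_a^1x_{a+1}^1\) is such an edge. Then \(g\) is not deleted.
\item If \(g\in M_1^0\), any choice of \(a\) works.
\item If \(g\in M_2^0\), again use \(s/2\ge2\) to choose \(a\) with \(x_a^0x_{a+1}^0\neq g\). Then \(g\) remains on the layer-\(0\) path.
\item If \(g=x_j^0x_j^1\) is a vertical edge between layers \(0\) and \(1\), use that \(M_2^1\) is a perfect matching of \(X^1\). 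Choose \(a\) so that \(x_a^1x_{a+1}^1\) is the edge of \(M_2^1\) covering \(x_j^1\). Then \(j\in\{a,a+1\}\), so \(g\) is one of the two added vertical edges.
\end{enumerate}
These four cases exhaust all edges of \(C_s\BoxProd P_t\), which completes the induction.

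The main obstacle is the inductive splice. It requires the layer-\(1\) edges of \(H'\) to include a whole perfect matching of \(X^1\) of the complementary type \(M_2\). This is exactly why the hypothesis is strengthened to contain a full matching, and why the rotation symmetry between \(M_1\) and \(M_2\) is needed.
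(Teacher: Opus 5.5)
Your proof is correct and uses the same mechanism as the paper. Consecutive layers are merged by square switches whose switched columns alternate in parity, so the boundary matching \(M_1^0\) is never broken, and the paper's explicit cycle \(H_c\) (the diagonal staircase \(R_0(c),R_1(c\modadd{s}1),\ldots\) with \(c\) odd) is one of the cycles your recursion produces. The difference is bookkeeping: the paper fixes the whole staircase and picks the single odd parameter \(c\) to capture \(g\), whereas you carry \(g\) through the inductive hypothesis and choose only the bottom splice column, which also spells out the Hamiltonicity check the paper leaves as ``induction shows''.
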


\begin{proof}
The two choices are equivalent under the relabeling
\(x_j\mapsto x_{j\modadd{s}1}\), so it suffices to prove the assertion for
\(M_1^0\); the argument for \(M_2^0\) is similar.  For \(t=1\), take
\(H=C_s^0\).  Hence assume \(t\geq2\), and let
\[
 \mathcal A_1=\{c\in\Z_s:x_c^0x_{c\modadd{s}1}^0\in M_1^0\},
 \qquad
 \mathcal A_2=\{c\in\Z_s:x_c^0x_{c\modadd{s}1}^0\in M_2^0\}.
\]
Thus \(|\mathcal A_1|=|\mathcal A_2|=s/2\), and \(\mathcal A_1\cap \mathcal A_2=\varnothing\).  Note that all the numbers in \( \mathcal A_1\) are even and  all the numbers in \( \mathcal A_2\) are odd. For each \(c\in\mathcal A_2\), 
\begin{equation}\label{eq:cylinder-cycle}
 H_c=\left(\bigcup_{i=0}^{t-1}C_s^i\right)
 \symdiff R_0(c)\symdiff R_1(c\modadd{s}1)\symdiff\cdots
 \symdiff R_{t-2}\bigl(c\modadd{s}(t-2)\bigr)
\end{equation}
where
\(R_i(j)=x_j^ix_{j\modadd{s}1}^ix_{j\modadd{s}1}^{i+1}x_j^{i+1}x_j^i\).

Induction shows that  \(\forall c \in \mathcal A_2\), \(H_c\) is a Hamilton cycle. Also, since \(\mathcal A_1\cap \mathcal A_2=\varnothing\), \(H_c\) contains every edge of \(M_1^0\).

It remains to select \(c\in\mathcal A_2\) so that \(g\in E(H_c)\).

If \(g=x_a^ix_a^{i+1}\), the edge \(g\) is added when
\(c\in\{a\modadd{s}(-i),a\modadd{s}(-i-1)\}\cap\mathcal A_2\).

If \(g=x_a^ix_{a\modadd{s}1}^i\), this edge can be deleted only for
\(c=a\modadd{s}(-i)\) \((i<t-1)\), or
\(c=a\modadd{s}(1-i)\) \((i>0)\).
The two excluded values differ by \(1\), so at most one belongs to the parity class \(\mathcal A_2\); since \(|\mathcal A_2|=s/2\geq2\), an admissible \(c\in\mathcal A_2\) remains.

The proof for \(M_2^0\) is identical after a cyclic shift. This completes the proof.
\end{proof}

Figure~\ref{fig:cylinder-lemma} illustrates the construction in Lemma~\ref{lem:cylinder}.

\begin{figure}[htbp]
\centering
\PaperFigure{%
\begin{tikzpicture}[
  x=.92cm,y=.90cm,
  line cap=round,line join=round,
  grid edge/.style={draw=black!24,line width=.55pt},
  switch edge/.style={draw=blue!72!black,line width=1.35pt},
  matching edge/.style={draw=red!78!black,line width=2.05pt},
  selected edge/.style={draw=green!45!black,line width=2.25pt},
  hamilton edge/.style={draw=blue!45!black,line width=1.55pt},
  vertex/.style={circle,fill=black,inner sep=0pt,minimum size=3.7pt},
  row label/.style={font=\scriptsize,anchor=east},
  column label/.style={font=\scriptsize,anchor=south},
  edge dots/.style={font=\scriptsize,inner sep=.2pt,fill=white},
  switch label/.style={font=\tiny,text=blue!65!black,
                       fill=white,fill opacity=.88,text opacity=1,
                       inner sep=1pt},
  panel label/.style={font=\small}
]
\begin{scope}
  \foreach \i in {0,...,3}{%
    \pgfmathtruncatemacro{\yy}{3-\i}%
    \foreach \j in {0,...,5}{\coordinate (L\i\j) at (\j,\yy);}%
  }
  \fill[blue!6] (L01) rectangle (L12);
  \fill[blue!6] (L12) rectangle (L23);
  \foreach \i in {0,...,3}{%
    \foreach \j/\k in {0/1,1/2,2/3,4/5}{%
      \draw[grid edge] (L\i\j)--(L\i\k);%
    }
    \pgfmathtruncatemacro{\yy}{3-\i}%
    \node[edge dots,text=black!32] at (3.5,\yy) {$\cdots$};
    \draw[grid edge] (L\i5)
      .. controls +(0,.38) and +(0,.38) .. (L\i0);
  }
  \foreach \i/\k in {0/1,1/2}{%
    \foreach \j in {0,...,5}{\draw[grid edge] (L\i\j)--(L\k\j);}%
  }
  \foreach \j in {0,...,5}{%
    \node[edge dots,text=black!32] at (\j,.50) {$\vdots$};%
  }
  \draw[matching edge] (L00)--(L01);
  \draw[matching edge] (L02)--(L03);
  \draw[matching edge] (L04)--(L05);
  \draw[switch edge] (L01)--(L02)--(L12)--(L11)--cycle;
  \draw[switch edge] (L12)--(L13)--(L23)--(L22)--cycle;
  \node[edge dots,text=blue!72!black] at (3.5,1) {$\cdots$};
  \node[edge dots,text=blue!72!black] at (3.5,0) {$\cdots$};
  \node[switch label,align=center] at (3.5,.50) {continuous switching};
  \draw[selected edge] (L12)--(L22);
  \node[switch label] at (1.50,2.50) {$R_0(1)$};
  \node[switch label] at (2.62,1.50) {$R_1(2)$};
  \foreach \i in {0,...,3}{%
    \foreach \j in {0,...,5}{\node[vertex] at (L\i\j) {};}%
    \pgfmathtruncatemacro{\yy}{3-\i}%
  }
  \node[row label] at (-.48,3) {$C_s^0$};
  \node[row label] at (-.48,2) {$C_s^1$};
  \node[row label] at (-.48,1) {$C_s^2$};
  \node[row label] at (-.48,0) {$C_s^{t-1}$};
  \node[column label] at ($(L00)+(0,.32)$) {$x_0^0$};
  \node[column label] at ($(L01)+(0,.32)$) {$x_1^0$};
  \node[column label] at ($(L02)+(0,.32)$) {$x_2^0$};
  \node[column label] at ($(L03)+(0,.32)$) {$x_3^0$};
  \node[column label] at ($(L04)+(0,.32)$) {$x_{s-2}^0$};
  \node[column label] at ($(L05)+(0,.32)$) {$x_s^0$};
  \node[panel label] at (2.5,4.12) {\textup{(a)} Switching squares};
\end{scope}

\begin{scope}[xshift=7.25cm]
  \foreach \i in {0,...,3}{%
    \pgfmathtruncatemacro{\yy}{3-\i}%
    \foreach \j in {0,...,5}{\coordinate (H\i\j) at (\j,\yy);}%
  }
  \foreach \i in {0,...,3}{%
    \foreach \j/\k in {0/1,1/2,2/3,4/5}{%
      \draw[grid edge] (H\i\j)--(H\i\k);%
    }
    \pgfmathtruncatemacro{\yy}{3-\i}%
    \node[edge dots,text=black!32] at (3.5,\yy) {$\cdots$};
    \draw[grid edge] (H\i5)
      .. controls +(0,.38) and +(0,.38) .. (H\i0);
  }
  \foreach \i/\k in {0/1,1/2}{%
    \foreach \j in {0,...,5}{\draw[grid edge] (H\i\j)--(H\k\j);}%
  }
  \foreach \j in {0,...,5}{%
    \node[edge dots,text=blue!45!black] at (\j,.50) {$\vdots$};%
  }
  \foreach \j/\k in {0/1,2/3,4/5}{\draw[hamilton edge] (H0\j)--(H0\k);}
  \node[edge dots,text=blue!45!black] at (3.5,3) {$\cdots$};
  \foreach \j/\k in {0/1,4/5}{\draw[hamilton edge] (H1\j)--(H1\k);}
  \node[edge dots,text=blue!45!black] at (3.5,2) {$\cdots$};
  \foreach \j/\k in {0/1,1/2,4/5}{\draw[hamilton edge] (H2\j)--(H2\k);}
  \foreach \j/\k in {0/1,1/2,2/3,4/5}{\draw[hamilton edge] (H3\j)--(H3\k);}
  \foreach \i in {0,...,3}{%
    \draw[hamilton edge] (H\i5)
      .. controls +(0,.38) and +(0,.38) .. (H\i0);%
  }
  \draw[hamilton edge] (H01)--(H11) (H02)--(H12);
  \draw[hamilton edge] (H12)--(H22) (H13)--(H23);
  \draw[matching edge] (H00)--(H01);
  \draw[matching edge] (H02)--(H03);
  \draw[matching edge] (H04)--(H05);
  \draw[selected edge] (H12)--(H22);
  \foreach \i in {0,...,3}{%
    \foreach \j in {0,...,5}{\node[vertex] at (H\i\j) {};}%
    \pgfmathtruncatemacro{\yy}{3-\i}%
  }
  \node[row label] at (-.48,3) {$C_s^0$};
  \node[row label] at (-.48,2) {$C_s^1$};
  \node[row label] at (-.48,1) {$C_s^2$};
  \node[row label] at (-.48,0) {$C_s^{t-1}$};
  \node[panel label] at (2.5,4.12) {\textup{(b)} Resulting Hamilton cycle $H_1$};
\end{scope}

\draw[matching edge] (0,-.72)--(.70,-.72);
\node[font=\scriptsize,anchor=west] at (.83,-.72)
  {prescribed matching $M_1^0$};
\draw[switch edge] (7.90,-.94) rectangle (8.55,-.50);
\node[font=\scriptsize,anchor=west] at (8.70,-.72)
  {switches $R_i(1+i)$};
\draw[selected edge] (0,-1.35)--(.70,-1.35);
\node[font=\scriptsize,anchor=west] at (.83,-1.35)
  {selected edge $g=x_2^1x_2^2$};
\draw[hamilton edge] (7.90,-1.35)--(8.60,-1.35);
\node[font=\scriptsize,anchor=west] at (8.73,-1.35)
  {edges of $H_1$};
\end{tikzpicture}%
}
\caption{Schematic of the construction in Lemma~\ref{lem:cylinder}.}
\label{fig:cylinder-lemma}
\end{figure}

\begin{lemma}\label{lem:even-torus}
Let \(s,r\geq4\) be even and \(T=C_s\BoxProd C_r\).  For every ordered pair
of independent edges \((e,f)\) and every even \(4\leq\ell\leq sr-4\), there
are vertex-disjoint cycles \(J_1,J_2\) such that \(V(J_1)\cup V(J_2)=V(T),
 |J_1|=\ell,|J_2|=sr-\ell,e\in E(J_1),f\in E(J_2).\)
\end{lemma}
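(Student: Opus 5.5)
By the symmetry $(e,f,\ell)\leftrightarrow(f,e,sr-\ell)$ it suffices to treat $4\le\ell\le sr/2$. We also use the automorphisms of $T$: translations in $\Z_s\times\Z_r$, the reflections of each coordinate, and the swap of the two factors when $s=r$. With these we normalize $e$ to be the horizontal edge $x_0^0x_1^0$ or the vertical edge $x_0^0x_0^1$. The idea is to cut $T$ along two parallel rings of edges into two cylinders $C_s\BoxProd P_a$ and $C_s\BoxProd P_{r-a}$, or into the analogous vertical pieces. The cycle $J_1$ will be built inside a small region containing $e$, and $J_2$ will be a Hamilton cycle of the complementary region through $f$.

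\textbf{Step 1: the region for $J_1$.} Choose $J_1$ as the boundary of a ``staircase'' polyomino of $\ell/2-1$ unit squares. For example, take the first $k$ columns of two consecutive rows, then extend by whole rows. Its vertex set should be a union of full layers $C_s^0,\dots,C_s^{a-1}$, plus a contiguous block of $2j$ vertices in two consecutive layers, or a block in one layer. The placement is chosen so that it contains $e$ and avoids both endpoints of $f$. Where possible, $J_1$ itself is taken to be a Hamilton cycle of a subcylinder $C_s\BoxProd P_a$ through $e$, obtained from Lemma~\ref{lem:cylinder} with $g=e$. For lengths not divisible by $s$, the cylinder cycle is extended by a ladder $x_{c}^{a}\cdots x_{c+j-1}^{a}$, $x_{c}^{a+1}\cdots x_{c+j-1}^{a+1}$, attached through one square switch $R$.

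\textbf{Step 2: the complement.} The complement $T-V(J_1)$ is a cylinder $C_s\BoxProd P_{b}$, possibly with a ladder-shaped bite removed from its first two rows. For a pure cylinder, $f$ lies in a Hamilton cycle of it by Lemma~\ref{lem:cylinder}; if $b=1$ we simply take the layer cycle, which works provided $f$ lies in that layer. When a bite is removed, we take the remaining two partial rows to be traversed as a zigzag. We then use Lemma~\ref{lem:cylinder} on the rows below, with the matching $M_1$ or $M_2$ chosen so that the matching edges on the boundary row are exactly the places where the partial rows can be spliced in via squares. This is the reason the lemma was stated with a prescribed perfect matching of the first layer: every other edge of the boundary layer is kept, so the zigzag can be inserted by symmetric differences with squares. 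The edge $f$ is handled by the lemma's free edge $g$ when $f$ lies in the full cylinder part. When $f$ lies in the bitten rows, we instead shift the bite horizontally, using translation freedom, so that $f$ becomes a zigzag edge.

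\textbf{Step 3: case analysis and obstacle.} The cases are organized by the relative position of $e$ and $f$: same layer, adjacent layers, or far apart, and $f$ horizontal or vertical. In each case we choose the cutting direction, rows or columns, so that $e$ and $f$ are separable. The main obstacle is the configuration where $e$ and $f$ are close, e.g.\ $f=x_0^1x_1^1$ parallel to $e$ in an adjacent layer, for small $\ell$ such as $4$ or $6$. This is exactly where $K_2\BoxProd C_{2p}$ fails. Here the second cyclic factor $C_r$, $r\ge4$, must be used. For $\ell=4$ we take $J_1$ to be the square $x_0^0x_1^0x_1^{r-1}x_0^{r-1}$ on the other side of $e$, so that $J_1$ wraps around in the $C_r$ direction. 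We then need a Hamilton cycle of $T$ minus a square through $f$. I would first try to obtain it directly: start from a Hamilton cycle of the cylinder $C_s\BoxProd P_{r-2}$ through $f$ from Lemma~\ref{lem:cylinder}, then absorb the remaining two partial rows as a ladder via one square switch. I expect these small-$\ell$, nearby-edge cases, together with $s=4$, where the parity argument in Lemma~\ref{lem:cylinder} leaves little room, to require explicit verification.
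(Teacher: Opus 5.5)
Your toolkit is the right one: Lemma~\ref{lem:cylinder} with a prescribed boundary matching, plus square switches. However, the proposal stops where the lemma's content lies, and the configurations you defer to ``explicit verification'' are never resolved. \textbf{The missing idea is a normalization guaranteeing that $f$ never touches a row used, even partially, by $J_1$.}

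The paper does this in two stages. First, it exchanges the two factors whenever some row contains ends of both $e$ and $f$. This is allowed for all $s,r$, not only $s=r$, because the statement is symmetric in $s$ and $r$. Independence then ensures that no row contains ends of both edges. It then translates $e$ to $x_0^0x_1^0$ or $x_0^0x_0^1$. Second, once $a$ is fixed ($a=1$ if $\ell\le s$, and $a=\lceil\ell/s\rceil-1\le r/2-1$ otherwise), it applies $i\mapsto -i$ or $i\mapsto 1-i$, both of which fix $e$, so that $V(f)\subseteq\bigcup_{i=a+1}^{r-1}V(C_s^i)$.

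After this, $f$ always lies in a pure cylinder. There it is handled by the free edge of Lemma~\ref{lem:cylinder} together with a parity choice of the splice column (Table~\ref{tab:case2-choice-c}). Your ``main obstacle'' $f=x_0^1x_1^1$ becomes routine:
\begin{itemize}
\item The reflection sends $f$ to row $r-1$.
\item $J_1$ on rows $0,1$ is, in original coordinates, precisely your square $x_0^0x_1^0x_1^{r-1}x_0^{r-1}$.
\item The complement works for every $s\ge4$: the ladder on rows $0,1$, columns $k,\dots,s-1$, is joined by one square to a Hamilton cycle of rows $2,\dots,r-1$ through $f$.
\item The single collision $f=x_k^2x_{k+1}^2$ is avoided by splicing through row $r-1$ instead.
\end{itemize}
Your concern about $s=4$ is unfounded. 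Lemma~\ref{lem:cylinder} only needs $s/2\ge2$ starting columns of each parity.

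Without that normalization, your Step~2 is where the argument breaks. There $f$ may lie in the two partially used rows, or on an edge leaving them. ``Shift the bite horizontally'' is unavailable when the bite must itself contain $e$. Also, if the leftover partial rows are traversed as one ladder cycle spliced by one square, only its two end rungs survive. For example, take $s=8$, $\ell=4$, $e=x_0^0x_1^0$, $f=x_4^0x_4^1$, with the bite in rows $0,1$. The bite is forced onto columns $0,1$, and $f$ is an interior rung of the leftover ladder. You must switch to rows $r-1,0$, where $f$ becomes a splice rung, or exchange the factors. Step~3 asserts that a separating cut direction can be chosen, but gives no argument that some choice of direction and bite placement always succeeds.

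Step~2 also omits the case where $J_1$ takes a block from a single layer. The complement then contains an even path from a partial row, which must be absorbed in pairs through the boundary matching of the next row. That is exactly the paper's device, which uses one split row $a$ instead of your two-row bite. The first $q$ column pairs of row $a$ go to $J_1$ via $R_j^-$, and the rest go to $J_2$ via $R_j^+$. This yields every $\ell=as+2q$, $1\le q\le s/2$, uniformly, with no two-row interface in which $f$ could land.
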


\begin{proof}
It suffices to prove the assertion for \(4\leq\ell\leq sr/2\). 

Write
\(C_s=x_0x_1\cdots x_{s-1}x_0\) and
\(C_r=y_0y_1\cdots y_{r-1}y_0\), put \(x_j^i=(x_j,y_i)\), and use
\(C_s^i=x^i_0x^i_1\cdots x^i_{s-1}x^i_0\).  We first introduce a key operation to put the marked edges into one of two canonical forms.

\smallskip
\smallskip
\noindent\emph{Normalization claim.}
Write all subscripts modulo \(s\) and all superscripts modulo \(r\).  Let
\[
  \Phi:V(C_s\BoxProd C_r)\longrightarrow V(C_r\BoxProd C_s),
  \qquad \Phi(x_p^u)=x_u^p,
\]
be the factor-exchange isomorphism (after the exchange we rename the two
factors \(C_r,C_s\) as \(C_s,C_r\)).  The following table records exactly
when this exchange is needed.  In the rows marked Yes, we apply \(\Phi\)
first and then translate the coordinates; in the rows marked No, we only
translate.  In either case, the first edge is put into one of the canonical
forms
\[
  \textnormal{(H)}\quad e=x_0^0x_1^0,
  \qquad
  \textnormal{(V)}\quad e=x_0^0x_0^1.
\]

\begin{table}[htbp]
\centering
\caption{Normalization cases for the prescribed edges in
Lemma~\ref{lem:even-torus}.}
\label{tab:normalization}
\scriptsize
\setlength{\tabcolsep}{3pt}
\renewcommand{\arraystretch}{1.35}
\begin{tabularx}{\linewidth}
{@{}>{\raggedright\arraybackslash}p{0.25\linewidth}
    >{\centering\arraybackslash}p{0.22\linewidth}
    >{\centering\arraybackslash}p{0.10\linewidth}
    >{\raggedright\arraybackslash}X@{}}
\toprule
Prescribed edges & Condition & Apply \(\Phi\)? & Why no row contains ends of both edges \\
\midrule
\(\begin{gathered}
e=x_p^u x_{p\modadd{s}1}^u,\\
f=x_q^v x_{q\modadd{s}1}^v
\end{gathered}\)
& \(u\ne v\) & No
& The ends of \(e\) lie in row \(u\), whereas the ends of \(f\) lie in row \(v\). \\
\addlinespace
\(\begin{gathered}
e=x_p^u x_{p\modadd{s}1}^u,\\
f=x_q^u x_{q\modadd{s}1}^u
\end{gathered}\)
& \(u=v\) & Yes
& After applying \(\Phi\), the ends lie in the row sets
\(\{p,p\modadd{s}1\}\) and \(\{q,q\modadd{s}1\}\).  These sets are
disjoint because \(e\) and \(f\) are independent. \\
\midrule
\(\begin{gathered}
e=x_p^u x_p^{u\modadd{r}1},\\
f=x_q^v x_q^{v\modadd{r}1}
\end{gathered}\)
& \(\{u,u\modadd{r}1\}\cap\{v,v\modadd{r}1\}=\varnothing\) & No
& The ends lie in the disjoint row sets
\(\{u,u\modadd{r}1\}\) and \(\{v,v\modadd{r}1\}\). \\
\addlinespace
\(\begin{gathered}
e=x_p^u x_p^{u\modadd{r}1},\\
f=x_q^v x_q^{v\modadd{r}1}
\end{gathered}\)
& \(\{u,u\modadd{r}1\}\cap\{v,v\modadd{r}1\}\ne\varnothing\) & Yes
& After applying \(\Phi\), the two edges are horizontal in rows \(p\) and \(q\).
Under the condition in the second column, independence implies \(p\ne q\). \\
\midrule
\(\begin{gathered}
e=x_p^u x_{p\modadd{s}1}^u,\\
f=x_q^v x_q^{v\modadd{r}1}
\end{gathered}\)
& \(u\notin\{v,v\modadd{r}1\}\) & No
& The ends of \(e\) lie in row \(u\), while the ends of \(f\) lie in rows
 \(v\) and \(v\modadd{r}1\). \\
\addlinespace
\(\begin{gathered}
e=x_p^u x_{p\modadd{s}1}^u,\\
f=x_q^v x_q^{v\modadd{r}1}
\end{gathered}\)
& \(u\in\{v,v\modadd{r}1\}\) & Yes
& After applying \(\Phi\), the ends of \(e\) lie in rows
 \(p\) and \(p\modadd{s}1\), whereas the ends of \(f\) lie in row \(q\).
Independence gives \(q\notin\{p,p\modadd{s}1\}\). \\
\midrule
\(\begin{gathered}
e=x_p^u x_p^{u\modadd{r}1},\\
f=x_q^v x_{q\modadd{s}1}^v
\end{gathered}\)
& \(v\notin\{u,u\modadd{r}1\}\) & No
& The ends of \(e\) lie in rows \(u\) and \(u\modadd{r}1\), while the ends of
 \(f\) lie in row \(v\). \\
\addlinespace
\(\begin{gathered}
e=x_p^u x_p^{u\modadd{r}1},\\
f=x_q^v x_{q\modadd{s}1}^v
\end{gathered}\)
& \(v\in\{u,u\modadd{r}1\}\) & Yes
& After applying \(\Phi\), the ends of \(e\) lie in row \(p\), whereas the
 ends of \(f\) lie in rows \(q\) and \(q\modadd{s}1\).  Independence gives
 \(p\notin\{q,q\modadd{s}1\}\). \\
\bottomrule
\end{tabularx}
\end{table}

The last column shows that, after the indicated factor exchange and
translations, no row contains endpoints of both prescribed edges.  After this
single table normalization, the row reflections
\[
  \rho_H(i)=-i\pmod r,\qquad \rho_V(i)=1-i\pmod r
\]
preserve (H) and (V), respectively.  For any prescribed
\(1\le a\le r/2-1\), choose the identity or the appropriate reflection so that
\[
V(f)\subseteq\bigcup_{i=a+1}^{r-1}V(C_s^i).
\]
This condition is on vertices, so it also covers an edge crossing a row
boundary.  The inverse of the resulting coordinate isomorphism is recorded
for the final output.

We apply the factor-exchange and translation step in the table once, and
retain the resulting coordinate names \(s,r\).  The row reflection is chosen
separately after the length range is fixed.

Now we start to prove this lemma.

\smallskip
\noindent\emph{Case 1: \(4\leq\ell\leq s\).}
Choose the row reflection with \(a=1\).  Then \(e\) has one of the two
canonical forms above and \(f\in E(D)\) for the cylinder formed by rows
\(2,\ldots,r-1\).
Write \(\ell=2k\), where \(2\leq k\leq s/2\).
 Let
\[
 J_1=x_0^0x_1^0\cdots x_{k-1}^0
       x_{k-1}^{1}x_{k-2}^{1}\cdots x_0^{1}x_0^0
\]
denote the \(2k\)-cycle containing \(e\); its first edge is form
\textup{(H)}, and its last edge is form \textup{(V)}.

On the same two rows let
\[
 K=x_k^0x_{k+1}^0\cdots x_{s-1}^0
      x_{s-1}^{1}x_{s-2}^{1}\cdots x_k^{1}x_k^0 .
\]
The remaining rows induce \(D\cong C_s\BoxProd P_{r-2}\), and \(f\in E(D)\).
Put \(g_h=x_k^h x_{k+1}^h\) for \(h\in\{2,r-1\}\). If \(f\ne g_2\),
regard \(C_s^2\) as row zero of \(D\) and apply
Lemma~\ref{lem:cylinder} with the alternating boundary matching that contains
\(g_2\), taking \(f\) as the selected edge.  This gives a Hamilton cycle
\(H\) containing both \(f\) and \(g_2\); set
\[
 R=x_k^1x_{k+1}^1x_{k+1}^2x_k^2x_k^1 .
\]
If \(f=g_2\), relabel the rows of \(D\) in the order
\(C_s^{r-1},C_s^{r-2},\ldots,C_s^2\), so that \(C_s^{r-1}\) is row zero.
Apply Lemma~\ref{lem:cylinder} with the alternating boundary matching that
contains \(g_{r-1}\), taking \(f=g_2\) as the selected edge.  The resulting
Hamilton cycle \(H\) contains both \(f\) and \(g_{r-1}\); set
\[
 R=x_k^0x_{k+1}^0x_{k+1}^{r-1}x_k^{r-1}x_k^0 .
\]
In either case \(f\notin E(R)\), and \(J_2=(K\cup H)\symdiff R\) is one cycle on the vertices outside \(J_1\).

\begin{figure}[htbp]
\centering
\PaperFigure{%
\begin{tikzpicture}[
  x=.95cm,y=.90cm,
  line cap=round,line join=round,
  ambient edge/.style={draw=black!22,line width=.55pt},
  omitted edge/.style={draw=black!32,dashed,line width=.55pt},
  jone edge/.style={draw=orange!85!black,line width=1.75pt},
  jtwo edge/.style={draw=blue!68!black,line width=1.75pt},
  deleted edge/.style={draw=blue!55!red,line width=1.2pt},
  e edge/.style={draw=red!88!black,line width=2.1pt},
  f edge/.style={draw=green!50!black,line width=2.1pt},
  vertex/.style={circle,fill=black,inner sep=0pt,minimum size=3.8pt},
  row label/.style={font=\scriptsize,anchor=east},
  column label/.style={font=\scriptsize,anchor=south},
  graph label/.style={font=\scriptsize,fill=white,fill opacity=.88,
                      text opacity=1,inner sep=1pt},
  legend text/.style={font=\scriptsize,anchor=west},
  legend mark/.style={line width=1.8pt},
  edge dots/.style={font=\scriptsize,inner sep=.2pt,fill=white}
]
\foreach \j in {0,...,7}{%
  \coordinate (A0\j) at (\j,3);%
  \coordinate (A1\j) at (\j,2);%
  \coordinate (A2\j) at (\j,1);%
  \coordinate (A3\j) at (\j,0);%
}

\fill[blue!8] (A13) rectangle (A24);

\foreach \i in {0,...,3}{%
  \foreach \j/\k in {0/1,2/3,3/4,5/6,6/7}{%
    \draw[ambient edge] (A\i\j)--(A\i\k);%
  }
  \pgfmathtruncatemacro{\yy}{3-\i}%
  \node[edge dots,text=black!32] at (1.5,\yy) {$\cdots$};%
  \node[edge dots,text=black!32] at (4.5,\yy) {$\cdots$};%
  \draw[ambient edge] (A\i7)
    .. controls +(0,.38) and +(0,.38) .. (A\i0);%
}
\foreach \j in {0,...,7}{%
  \draw[ambient edge] (A0\j)--(A1\j);%
  \draw[ambient edge] (A1\j)--(A2\j);%
  \draw[omitted edge] (A2\j)--(A3\j);%
}

\draw[jone edge] (A00)--(A01);
\node[edge dots,text=orange!85!black] at (1.5,3) {$\cdots$};
\draw[jone edge] (A02)--(A12);
\node[edge dots,text=orange!85!black] at (1.5,2) {$\cdots$};
\draw[jone edge] (A11)--(A10);
\draw[jone edge] (A10)--(A00);

\draw[jtwo edge] (A03)--(A04);
\node[edge dots,text=blue!68!black] at (4.5,3) {$\cdots$};
\draw[jtwo edge] (A05)--(A06)--(A07);
\draw[jtwo edge] (A03)--(A13);
\draw[jtwo edge] (A07)--(A17);
\node[edge dots,text=blue!68!black] at (4.5,2) {$\cdots$};
\draw[jtwo edge] (A15)--(A16)--(A17);

\draw[jtwo edge] (A20)--(A21);
\node[edge dots,text=blue!68!black] at (1.5,1) {$\cdots$};
\draw[jtwo edge] (A22)--(A23);
\node[edge dots,text=blue!68!black] at (4.5,1) {$\cdots$};
\draw[jtwo edge,densely dotted] (A25)--(A35);
\draw[jtwo edge] (A26)--(A27);
\draw[jtwo edge,densely dotted] (A26)--(A36);
\draw[jtwo edge] (A36)--(A37);
\draw[jtwo edge] (A37)
  .. controls +(0,.38) and +(0,.38) .. (A30);
\draw[jtwo edge] (A30)--(A31);
\node[edge dots,text=blue!68!black] at (1.5,0) {$\cdots$};
\draw[jtwo edge] (A32)--(A33)--(A34);
\node[edge dots,text=blue!68!black] at (4.5,0) {$\cdots$};
\draw[jtwo edge] (A27)
  .. controls +(0,.38) and +(0,.38) .. (A20);

\draw[jtwo edge] (A13)--(A23) (A14)--(A24);
\draw[deleted edge] (3.43,2.10)--(3.52,1.90)
                      (3.53,2.10)--(3.62,1.90);
\draw[deleted edge] (3.43,1.10)--(3.52,.90)
                      (3.53,1.10)--(3.62,.90);

\draw[e edge] (A00)--(A01);
\draw[f edge,densely dotted] (A26)--(A36);
\node[graph label,text=red!78!black] at (.50,2.67) {$e$};
\node[graph label,text=green!45!black,anchor=west] at (6.08,1.56) {$f$};
\foreach \i in {0,...,3}{%
  \foreach \j in {0,...,7}{\node[vertex] at (A\i\j) {}; }%
}

\node[row label] at (-.48,3) {$C_s^0$};
\node[row label] at (-.48,2) {$C_s^1$};
\node[row label] at (-.48,1) {$C_s^2$};
\node[row label] at (-.48,0) {$C_s^{r-1}$};
\node[font=\scriptsize,anchor=east] at (-.48,.50) {$\vdots$};
\node[column label] at (0,3.30) {$x_0^0$};
\node[column label] at (1,3.30) {$x_1^0$};
\node[column label] at (2,3.30) {$x_{k-1}^0$};
\node[column label] at (3,3.30) {$x_k^0$};
\node[column label] at (4,3.30) {$x_{k+1}^0$};
\node[column label] at (5,3.30) {$x_{s-3}^0$};
\node[column label] at (6,3.30) {$x_{s-2}^0$};
\node[column label] at (7,3.30) {$x_{s-1}^0$};

\node[graph label,text=orange!85!black] at (1.12,2.45) {$J_1$};
\node[graph label,text=blue!68!black] at (5.92,2.78) {$K$};
\node[graph label,text=blue!68!black] at (1.22,.58) {$H$};
\node[graph label,text=blue!68!black] at (5.05,2.45) {$J_2$};
\node[graph label,text=blue!68!black] at (3.55,.34) {$J_2$};
\node[graph label,text=blue!58!red] at (3.50,1.50) {$R$};

\draw[jone edge,legend mark] (9.15,3.00)--(9.85,3.00);
\node[legend text] at (10.08,3.00) {cycle $J_1$};
\draw[jtwo edge,legend mark] (9.15,2.38)--(9.85,2.38);
\node[legend text] at (10.08,2.38) {cycle $J_2$};
\draw[deleted edge,legend mark] (9.25,1.73)--(9.38,1.93)
                                  (9.40,1.73)--(9.53,1.93);
\node[legend text] at (10.08,1.83) {edges deleted by $R$};
\draw[omitted edge,legend mark] (9.15,1.22)--(9.85,1.22);
\node[legend text] at (10.08,1.22) {omitted path};
\draw[e edge,legend mark] (9.15,.62)--(9.85,.62);
\node[legend text] at (10.08,.62) {prescribed $e\in E(J_1)$};
\draw[f edge,legend mark] (9.15,.02)--(9.85,.02);
\node[legend text] at (10.08,.02) {representative $f\in E(J_2)$};
\end{tikzpicture}%
}
\caption{Schematic of Case~1 in the proof of Lemma~\ref{lem:even-torus}.}
\label{fig:even-torus-case1}
\end{figure}

\medskip
\noindent\textit{Case 2: \(s<\ell\le sr/2\).}

Put \(a=\left\lceil\frac{\ell}{s}\right\rceil-1\) and \(q=\frac{\ell-as}{2}\), so that \(\ell=as+2q\), \(1\le q\le \frac{s}{2}\), and \(1\le a\le \frac{r}{2}-1\). Choose the row reflection in the normalization step with this value of \(a\); hence \(V(f)\subseteq\bigcup_{i=a+1}^{r-1}V(C_s^i)\). Put
\[
\begin{aligned}
L^-&:=T[\{x_j^i:0\le i\le a-1,\ j\in\mathbb Z_s\}]\cong C_s\BoxProd P_a,\\
L^+&:=T[\{x_j^i:a+1\le i\le r-1,\ j\in\mathbb Z_s\}]\cong C_s\BoxProd P_{r-a-1}.
\end{aligned}
\]
Thus \(f\in E(L^+)\) by normalization.

For any \(i\in\mathbb Z_r\) and \(c\in\mathbb Z_s\), define
\begin{equation}\label{eq:matching-case2}
\mathcal M_i(c):=
 \left\{x_{c\modadd{s}(2j)}^i x_{c\modadd{s}(2j+1)}^i:
 0\le j<\frac s2\right\}.
\end{equation}
We list all the cases to illustrate how to choose a proper column \(c\) as follows.

\begin{table}[htbp]
\centering
\caption{Choice of the starting column \(c\) and treatment of the
prescribed edges in Case~2.}
\label{tab:case2-choice-c}
\footnotesize
\setlength{\tabcolsep}{3pt}
\renewcommand{\arraystretch}{1.35}
\begin{tabularx}{\linewidth}
{@{}>{\raggedright\arraybackslash}p{0.17\linewidth}
    >{\raggedright\arraybackslash}p{0.23\linewidth}
    >{\centering\arraybackslash}p{0.11\linewidth}
    >{\raggedright\arraybackslash}X@{}}
\toprule
Case
& If \(f=x_d^{a+1}x_{d\modadd{s}1}^{a+1}\)
& Otherwise
& Why \(e\in E(J_1)\) and \(f\in E(J_2)\)
\\
\midrule
\(a\ge2\)
& \(c=d\modadd{s}1\)
& \(c=0\)
& The lower replacements delete only horizontal edges of
  \(C_s^{a-1}\), and hence do not delete \(e\). In the boundary-edge
  case, \(c\not\equiv d\pmod 2\), so \(f\notin \mathcal M_{a+1}(c)\).
  Otherwise, \(f\) is not an upper boundary edge and cannot be deleted.
\\
\midrule
\(a=1\), form \textup{(H)},
\newline \(e=x_0^0x_1^0\)
& \(c=d+1\) if \(d\) is even;
  \newline \(c=d\) if \(d\) is odd
& \(c=1\)
& In every case \(c\) is odd, so \(e\notin \mathcal M_0(c)\).
  If \(d\) is even, then \(f\notin \mathcal M_2(c)\). If \(d\) is odd,
  then \(f\) is the member of \(\mathcal M_2(c)\) indexed by \(j=0\), whereas
  the upper replacements use only \(j\ge q\ge1\). Every other \(f\)
  is unaffected.
\\
\midrule
\(a=1\), form \textup{(V)},
\newline \(e=x_0^0x_0^1\)
& \(c=s-1\) if \(d\) is even;
  \newline \(c=0\) if \(d\) is odd
& \(c=0\)
& Here \(c\in\{s-1,0\}\), so the complementary three-edge path in the
  lower replacement indexed by \(j=0\) contains \(e\); this replacement
  is made because \(q\ge1\).
  In the boundary-edge case, \(c\not\equiv d\pmod 2\), and hence
  \(f\notin \mathcal M_2(c)\). Every other \(f\) is unaffected.
\\
\bottomrule
\end{tabularx}
\end{table}

If \(a=1\) and form \textup{(V)} does not hold, or if \(a\ge2\),
Lemma~\ref{lem:cylinder} gives a Hamilton cycle \(H^-\) of \(L^-\)
satisfying
\(\mathcal M_{a-1}(c)\cup\{e\}\subseteq E(H^-)\).
In every case, the same lemma gives a Hamilton cycle \(H^+\) of \(L^+\)
satisfying
\(\mathcal M_{a+1}(c)\cup\{f\}\subseteq E(H^+)\).

When \(a=1\) and form \textup{(V)} holds, set \(H^-=C_s^0\); then
\(\mathcal M_0(c)\subseteq E(H^-)\), while \(H^+\) is the Hamilton cycle
of \(L^+\) containing \(\mathcal M_2(c)\cup\{f\}\) supplied by
Lemma~\ref{lem:cylinder}.

For \(0\le j<s/2\), put
\begin{equation}\label{eq:switches-case2}
\begin{aligned}
R_j^-&:=
 x_{c\modadd{s}(2j)}^{a-1}
 x_{c\modadd{s}(2j+1)}^{a-1}
 x_{c\modadd{s}(2j+1)}^{a}
 x_{c\modadd{s}(2j)}^{a}
  x_{c\modadd{s}(2j)}^{a-1},\\[2pt]
R_j^+&:=
 x_{c\modadd{s}(2j)}^{a+1}
 x_{c\modadd{s}(2j+1)}^{a+1}
 x_{c\modadd{s}(2j+1)}^{a}
 x_{c\modadd{s}(2j)}^{a}
  x_{c\modadd{s}(2j)}^{a+1}.
\end{aligned}
\end{equation}

Now we obtain the two disjoint cycles as desired.
\begin{equation}\label{eq:J1J2-case2}
 J_1:=
 H^-\triangle
 \left(
   \bigcup_{j=0}^{q-1}R_j^-
 \right),
 \qquad
 J_2:=
 H^+\triangle
 \left(
   \bigcup_{j=q}^{s/2-1}R_j^+
 \right),
\end{equation}

We verify the following three points:

(I) Cycles \(J_1\) and \(J_2\) are disjoint cycles that cover all the vertices.
For every \(j\), the cycle \(R_j^{-}\) meets \(H^{-}\) exactly in the corresponding edge of \(\mathcal M_{a-1}(c)\); hence the symmetric difference replaces this edge by the complementary three-edge path through two vertices of \(C_s^a\). Since the replaced edges are pairwise independent and the inserted paths are internally vertex-disjoint, \(J_1\) is a cycle; the same argument applies to \(J_2\). Moreover, the pairs \(\{c\modadd{s}(2j),c\modadd{s}(2j+1)\}, 0\le j<s/2,\) partition \(\mathbb Z_s\), with the pairs indexed by \(0\le j<q\) used by \(J_1\) and those indexed by \(q\le j<s/2\) used by \(J_2\). Therefore \(V(J_1)\mathbin{\dot\cup}V(J_2)=V(T).\)

(II) Cycles \(J_1\) and \(J_2\) have exactly lengths \(\ell\) and \(sr-\ell\).
Each symmetric difference used above substitutes three edges for one and therefore increases the cycle length by two. Since
\(|H^-|=as, |H^+|=(r-a-1)s\) and \(\ell=as+2q,\)
we obtain
\(|J_1|=as+2q=\ell\) and \(|J_2|=(r-a-1)s+2\left(\frac{s}{2}-q\right)=sr-\ell.\)

(III) Cycles \(J_1\) and \(J_2\) contain prescribed edges. By the choice of \(c\) in Table~\ref{tab:case2-choice-c}, except when \(a=1\) and form \textup{(V)} holds, \(e\in E(H^-)\) and none of the lower matching edges removed by the switches is \(e\), while \(f\in E(H^+)\) and none of the upper matching edges removed is \(f\). Hence \(e\in E(J_1)\) and \(f\in E(J_2)\). If \(a=1\) and form \textup{(V)} holds, then \(H^-=C_s^0\), \(c\in\{0,s-1\}\), and \(q\ge1\); thus \(R_0^-\) is switched, and its complementary three-edge path contains \(e=x_0^0x_0^1\). Table~\ref{tab:case2-choice-c} also ensures that \(f\) is not removed, so again \(e\in E(J_1)\) and \(f\in E(J_2)\). 

This completes Case~2.
\end{proof}

\begin{figure}[htbp]
\centering
\PaperFigure{%
\begin{tikzpicture}[
  x=.78cm,y=.68cm,
  line cap=round,line join=round,
  ambient edge/.style={draw=black!22,line width=.55pt},
  omitted edge/.style={draw=black!32,dashed,line width=.60pt},
  jone edge/.style={draw=orange!85!black,line width=1.75pt},
  jtwo edge/.style={draw=blue!68!black,line width=1.75pt},
  deleted edge/.style={draw=blue!55!red,line width=1.2pt},
  e edge/.style={draw=red!88!black,line width=2.1pt},
  e alt edge/.style={draw=red!88!black,dashed,line width=1.55pt},
  f edge/.style={draw=green!50!black,line width=2.1pt},
  switch fill/.style={fill=blue!8},
  vertex/.style={circle,fill=black,inner sep=0pt,minimum size=3.8pt},
  row label/.style={font=\scriptsize,anchor=east},
  column label/.style={font=\tiny,anchor=south},
  band label/.style={font=\tiny,anchor=east},
  graph label/.style={font=\scriptsize,fill=white,fill opacity=.88,
                      text opacity=1,inner sep=1pt},
  switch label/.style={font=\tiny,text=blue!58!black,
                       fill=white,fill opacity=.90,text opacity=1,
                       inner sep=.8pt},
  legend text/.style={font=\scriptsize,anchor=west},
  legend mark/.style={line width=1.8pt},
  formula/.style={font=\scriptsize,anchor=west},
  edge dots/.style={font=\scriptsize,inner sep=.2pt,fill=white},
  jone omitted/.style={jone edge,densely dotted},
  jtwo omitted/.style={jtwo edge,densely dotted}
]
\foreach \i/\y in {0/4,1/3,2/2,3/1,4/0}{%
  \foreach \j in {0,...,9}{\coordinate (G\i\j) at (\j,\y);}%
}

\fill[switch fill] (G10) rectangle (G21);
\fill[switch fill] (G12) rectangle (G23);
\fill[switch fill] (G14) rectangle (G25);
\fill[switch fill] (G26) rectangle (G37);
\fill[switch fill] (G28) rectangle (G39);

\foreach \i/\y in {0/4,1/3,2/2,3/1,4/0}{%
  \draw[ambient edge] (G\i0)--(G\i1);
  \node[edge dots,text=black!32] at (1.5,\y) {$\cdots$};
  \draw[ambient edge] (G\i2)--(G\i3)--(G\i4)--(G\i5)--(G\i6)--(G\i7);
  \node[edge dots,text=black!32] at (7.5,\y) {$\cdots$};
  \draw[ambient edge] (G\i8)--(G\i9);
  \draw[ambient edge] (G\i9)
    .. controls +(0,.36) and +(0,.36) .. (G\i0);
}

\foreach \j in {0,...,9}{%
  \draw[omitted edge] (G0\j)--(G1\j);
  \draw[ambient edge] (G1\j)--(G2\j);
  \draw[ambient edge] (G2\j)--(G3\j);
  \draw[omitted edge] (G3\j)--(G4\j);
}
\node[edge dots,text=black!32] at (0,3.5) {$\vdots$};
\node[edge dots,text=black!32] at (9,3.5) {$\vdots$};
\node[edge dots,text=black!32] at (0,.5) {$\vdots$};
\node[edge dots,text=black!32] at (9,.5) {$\vdots$};

\draw[jone edge] (G00)
  .. controls +(0,.45) and +(0,.45) .. (G09);
\draw[jone omitted] (G00)--(G10);
\draw[jone omitted] (G09)--(G19);
\draw[jone edge] (G10)--(G20)--(G21)--(G11);
\node[edge dots,text=orange!85!black] at (1.5,3) {$\cdots$};
\draw[jone edge] (G12)--(G22)--(G23)--(G13);
\draw[jone edge] (G13)--(G14);
\draw[jone edge] (G14)--(G24)--(G25)--(G15);
\draw[jone edge] (G15)--(G16)--(G17);
\node[edge dots,text=orange!85!black] at (7.5,3) {$\cdots$};
\draw[jone edge] (G18)--(G19);

\draw[jtwo edge] (G30)--(G31);
\node[edge dots,text=blue!68!black] at (1.5,1) {$\cdots$};
\draw[jtwo edge] (G32)--(G33)--(G34)--(G35)--(G36);
\draw[jtwo edge] (G36)--(G26)--(G27)--(G37);
\node[edge dots,text=blue!68!black] at (7.5,1) {$\cdots$};
\draw[jtwo edge] (G38)--(G28)--(G29)--(G39);
\draw[jtwo omitted] (G30)--(G40);
\draw[jtwo omitted] (G39)--(G49);
\draw[jtwo edge] (G40)
  .. controls +(0,-.62) and +(0,-.62) .. (G49);

\draw[e edge] (G00)--(G01);
\draw[f edge] (G48)--(G49);

\draw[deleted edge] (.38,3.12)--(.50,2.88)
                     (.54,3.12)--(.66,2.88);
\draw[deleted edge] (2.38,3.12)--(2.50,2.88)
                     (2.54,3.12)--(2.66,2.88);
\draw[deleted edge] (4.38,3.12)--(4.50,2.88)
                    (4.54,3.12)--(4.66,2.88);
\draw[deleted edge] (6.38,1.12)--(6.50,.88)
                    (6.54,1.12)--(6.66,.88);
\draw[deleted edge] (8.38,1.12)--(8.50,.88)
                    (8.54,1.12)--(8.66,.88);

\foreach \i in {0,1,2,3,4}{%
  \foreach \j in {0,...,9}{\node[vertex] at (G\i\j) {}; }%
}

\node[row label] at (-.52,4) {$C_s^0$};
\node[row label] at (-.52,3) {$C_s^{a-1}$};
\node[row label] at (-.52,2) {$C_s^a$};
\node[row label] at (-.52,1) {$C_s^{a+1}$};
\node[row label] at (-.52,0) {$C_s^{r-1}$};
\node[band label] at (-.52,3.52) {$C_s^1,\ldots,C_s^{a-2}$};
\node[band label] at (-.52,.52) {$C_s^{a+2},\ldots,C_s^{r-2}$};
\node[column label] at (0,4.28) {$x_0^0$};
\node[column label] at (1,4.28) {$x_1^0$};
\node[graph label,text=red!78!black] at (.50,3.70) {$e$};
\node[graph label,text=green!45!black] at (8.50,-.36) {$f$};
\node[graph label,text=orange!85!black] at (4.05,3.55) {$H^-$};
\node[graph label,text=blue!68!black] at (4.05,.48) {$H^+$};
\node[graph label,text=orange!85!black] at (5.35,2.46) {$J_1$};
\node[graph label,text=blue!68!black] at (3.55,1.38) {$J_2$};
\node[switch label] at (.50,2.50) {$R_0^-$};
\node[switch label] at (2.50,2.50) {$R_{q-2}^-$};
\node[switch label] at (4.50,2.50) {$R_{q-1}^-$};
\node[switch label] at (6.50,1.50) {$R_q^+$};
\node[switch label] at (8.50,1.50) {$R_{s/2-1}^+$};

\draw[jone edge,legend mark] (10.45,4.68)--(11.15,4.68);
\node[legend text] at (11.37,4.68) {$J_1$: pairs $0\le j<q$};
\draw[jtwo edge,legend mark] (10.45,3.90)--(11.15,3.90);
\node[legend text] at (11.37,3.90) {$J_2$: pairs $q\le j<s/2$};
\draw[ambient edge,legend mark] (10.45,3.32)--(11.15,3.32);
\draw[deleted edge] (10.67,3.42)--(10.79,3.22)
                    (10.81,3.42)--(10.93,3.22);
\node[legend text] at (11.37,3.32) {deleted matching edge};
\node[edge dots,text=black!32] at (10.80,2.74) {$\cdots$};
\node[legend text] at (11.37,2.74) {omitted rows / columns};
\draw[e edge,legend mark] (10.45,2.16)--(11.15,2.16);
\node[legend text] at (11.37,2.16) {$e=x_0^0x_1^0$ };
\draw[f edge,legend mark] (10.45,1.50)--(11.15,1.50);
\node[legend text] at (11.37,1.50) {prescribed $f\in E(L^+)$};
\end{tikzpicture}%
}
\caption{Schematic of Case~2 in the proof of Lemma~\ref{lem:even-torus}.}
\label{fig:even-torus-case2-full-grid}
\end{figure}

Let \([n]=\{1,\ldots,n\}\).  For a nonempty set
\(K=\{k_1,\ldots,k_m\}\subseteq[n]\), where
\(k_1<\cdots<k_m\), put
\(G_K:=F_{k_1}\Box\cdots\Box F_{k_m}\),
and define the coordinate projection on vertices by
\[
  \pi_K:V(G)\longrightarrow V(G_K),\qquad
  \pi_K(x_1,\ldots,x_n)
  :=(x_{k_1},\ldots,x_{k_m}).
\]

With respect to the fixed Cartesian decomposition
\(G=F_1\Box\cdots\Box F_n\), every edge \(g=uv\in E(G)\)
has a unique index \(i\in[n]\) such that
\(u_t=v_t\) for every \(t\ne i\) and
\(u_i v_i\in E(F_i)\).
We call \(i\) the direction of \(g\) and write
\(\operatorname{dir}(g)=i\).

If \(\operatorname{dir}(g)\in K\), define the projected edge
\(g_K:=\pi_K(u)\pi_K(v)\in E(G_K)\).

\begin{lemma}\label{lem:spanning-torus}
Let \(n\geq3\), let \(d_1,\ldots,d_n\) be even, and let
\(G=C(d_1,\ldots,d_n)
\not\cong Q_3\).
For any two edges \(e,f\in E(G)\), there is a partition
\([n]=I\mathbin{\dot\cup}J\)
such that \(|G_I|,|G_J|\geq4\), together with Hamilton cycles
\(D_I\) of \(G_I\) and \(D_J\) of \(G_J\),  for which \(D_I\BoxProd D_J\) contains both \(e\) and \(f\).
\end{lemma}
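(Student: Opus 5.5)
The plan is to reduce the statement to a coordinate bookkeeping argument, and then to invoke Lemma~\ref{lem:edge-hamiltonian} on each side of the partition. The key observation is the following. Let \(g\in E(G)\) with \(\dir(g)=i\), and suppose \(i\in I\). Then \(g\in E(D_I\BoxProd D_J)\) holds exactly when \(g_I\in E(D_I)\): the coordinates of \(g\) outside \(I\) are frozen at some vertex of \(G_J\), and every vertex of \(G_J\) lies on \(D_J\). So it suffices to choose \(I,J\) and Hamilton cycles \(D_I,D_J\) such that, for each of \(e,f\), the projection of that edge onto the side containing its direction is an edge of the Hamilton cycle on that side.

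Two facts will be used repeatedly.
\begin{enumerate}[label=(\alph*)]
\item Each \(G_K\) is a connected Cayley graph of the abelian group \(\prod_{k\in K}\Z_{d_k}\), by \eqref{eq:cayley-representation}. If \(|G_K|\ge4\), Lemma~\ref{lem:edge-hamiltonian} supplies a Hamilton cycle of \(G_K\) through any single prescribed edge.
\item If \(G_K\) is itself a cycle, namely \(K=\{k\}\) with \(d_k\ge4\) or \(G_K=C(2,2)\cong C_4\), then its only Hamilton cycle is \(G_K\). In that case every edge with direction in \(K\) is automatically covered.
\end{enumerate}

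Let \(i=\dir(e)\) and \(j=\dir(f)\).

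\emph{Case \(i\ne j\).}
\begin{itemize}
\item If \(d_i\ge4\), take \(I=\{i\}\) and \(J=[n]\setminus\{i\}\). Then \(D_I=C_{d_i}\), and \(|G_J|\ge4\) because \(J\) has at least two coordinates (\(n\ge3\)). Take \(D_J\) through \(f_J\) by (a). The case \(d_j\ge4\) is symmetric.
\item If \(d_i=d_j=2\) and \(n\ge4\), choose any \(k\notin\{i,j\}\) and put \(I=\{i,k\}\), \(J=[n]\setminus I\). Both sides have at least two coordinates, hence order at least \(4\). Apply (a) on each side, to \(e_I\) and to \(f_J\) respectively.
\item If \(d_i=d_j=2\) and \(n=3\), the remaining coordinate \(k\) has \(d_k\ge4\), since \(G\not\cong Q_3\). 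Take \(I=\{i,j\}\), so \(G_I=C_4\), and \(J=\{k\}\). By (b), \(D_I\BoxProd D_J=G\), which contains everything.
\end{itemize}

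\emph{Case \(i=j\).}
\begin{itemize}
\item If \(d_i\ge4\), take \(I=\{i\}\). Then \(D_I=F_i\) contains both \(e_I\) and \(f_I\), and \(J\) is the rest, with any Hamilton cycle from (a).
\item If \(d_i=2\), pair \(i\) with a coordinate \(k\), so \(I=\{i,k\}\). The choice of \(k\) must leave \(|G_J|\ge4\). When \(n\ge4\) any \(k\) works. When \(n=3\), pick \(k\) with \(d_k=2\) if one exists; the leftover coordinate then has \(d\ge4\), because \(G\not\cong Q_3\). Otherwise both other coordinates have \(d\ge4\) and either choice works.
\item If \(d_k=2\), then \(G_I=C_4\) and (b) applies.
\item If \(d_k\ge4\), then \(G_I\) is the prism \(K_2\BoxProd C_{d_k}\), and \(e_I,f_I\) are rungs, i.e. edges in direction \(i\).
\end{itemize}

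This prism situation is the one step where Lemma~\ref{lem:edge-hamiltonian} alone does not suffice, since two edges must be prescribed simultaneously. I would handle it by the explicit zigzag Hamilton cycle
\[
x_0^0x_0^1x_1^1x_1^0x_2^0x_2^1\cdots x_{d_k-1}^1x_{d_k-1}^0x_0^0 .
\]
It uses every rung, and it closes up because \(d_k\) is even. Hence it contains both \(e_I\) and \(f_I\). Finally, \(D_J\) is an arbitrary Hamilton cycle of \(G_J\), which exists by (a).

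The main obstacle is ensuring \(|G_I|,|G_J|\ge4\) in the small cases with \(n=3\) and several factors equal to \(K_2\). The exclusion \(G\not\cong Q_3\) is used exactly there, so I would carry out that subcase analysis most carefully.
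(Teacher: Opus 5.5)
Your proposal is correct and follows essentially the same route as the paper. You use a singleton cycle factor \(C_{d_p}\) when some marked direction has \(d_p\ge4\), the four-cycle \(F_i\BoxProd F_j\) when the marked directions are distinct \(K_2\)-directions, and the rung-zigzag Hamilton cycle of \(F_i\BoxProd F_h\) when both edges share a \(K_2\)-direction, with Lemma~\ref{lem:edge-hamiltonian} supplying the other side. The only differences are cosmetic. For \(n\ge4\) with two distinct \(K_2\)-directions you split \(e\) and \(f\) across the partition, whereas the paper's \(I=\{i,j\}\) works for all \(n\ge3\). You also spell out the \(n=3\) choice of partner coordinate that the paper encodes by taking \(d_h\) minimal.
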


\begin{proof}
Let \(i=\dir(e)\) and \(j=\dir(f)\).  We first choose \(I\) and \(D_I\).

\smallskip
\noindent\emph{Case 1: \(d_i\geq4\) or \(d_j\geq4\).}
Choose \(p\in\{i,j\}\) with \(d_p\geq4\), set \(I=\{p\}\), and take
\(D_I=F_p=C_{d_p}\).  This cycle contains the projection of every marked
edge whose direction is \(p\).

\smallskip
\noindent\emph{Case 2: \(d_i=d_j=2\) and \(i\ne j\).}
Set \(I=\{i,j\}\) and take the four-cycle
\(D_I=F_i\BoxProd F_j\).  It contains the projections of both marked edges.

\smallskip
\noindent\emph{Case 3: \(i=j\) and \(d_i=2\).}
Choose \(h\ne i\) with \(d_h\) minimum, and put \(I=\{i,h\}\).  Write
\(V(F_i)=\{0,1\}\) and list the vertices of \(F_h\) cyclically as
\(z_0,z_1,\ldots,z_{d_h-1}\), with \(F_h=K_2\) when \(d_h=2\).  For
\(q=0,\ldots,d_h-1\), concatenate the paths \(P_q=(q\bmod2,z_q)\,(q\bmod2,z_{q\modadd{d_h}1}) \,(1-(q\bmod2),z_{q\modadd{d_h}1})\), and define \(D_I=\bigcup_q P_q\). The end of one piece is the start of the next. Since \(d_h\) is even, the
concatenation closes, visits every
vertex of \(F_i\BoxProd F_h\) exactly once, and contains the direction-\(i\)
edge at every \(F_h\)-coordinate.  Let \(D_I\) be this Hamilton cycle; it
contains the projections of both \(e\) and \(f\).

Put \(J=[n]\setminus I\).  In each case, \(D_I\) contains the projection of
every marked edge whose direction lies in \(I\), and at most one marked
direction lies in \(J\).  We next verify the order needed for the second
factor.  In Case 1, \(|J|\geq2\), so \(|G_J|\geq4\).
In Case 2, \(|G_J|\geq4\) because \(|G_J|=2\) would force \(G=Q_3\).  In Case 3, \(|G_J|\geq4\) because \(|G_J|=2\)
would imply, by the minimality of \(d_h\), that \(d_h=2\), again forcing
\(G=Q_3\).  Thus \(|G_J|\geq4\) in every case, and each \(D_I\)
also has even order at least four.

In our case, \(G_J\) is a connected Cayley graph of the finite abelian group
\(\prod_{q\in J}\Z_{d_q}\) and \(|G_J|\geq4\).  Hence
Lemma~\ref{lem:edge-hamiltonian} applies to \(G_J\).  If one marked direction lies in \(J\), choose
a Hamilton cycle \(D_J\) containing the corresponding projected edge;
otherwise choose any edge of \(G_J\) and a Hamilton cycle containing it.

Finally, if a marked edge \(g\) has direction
in \(I\), then \(g_I\in E(D_I)\), while its fixed \(J\)-coordinate is a
vertex of the spanning cycle \(D_J\), hence
\(g\in E(D_I\BoxProd D_J)\).  The case \(\dir(g)\in J\) is similar.
\end{proof}

\begin{proof}[Proof of Theorem~\ref{thm:main}]
Assume first that $G$ is not one of the graphs in Example~\ref{ex:obstructions}, and let $e,f$ be independent edges.

\smallskip
\noindent\emph{Case 1: \(n\geq3\) and \(G
\not\cong Q_3\).}
By
Lemma~\ref{lem:spanning-torus}, \(G\) has a spanning subgraph
\(D_I\BoxProd D_J\cong C_s\BoxProd C_r\), where \(s,r\geq4\) are even,
that contains both \(e\) and \(f\).  Applying Lemma~\ref{lem:even-torus}
inside this spanning torus yields the required cycles for every even
\(4\leq\ell\leq N-4\).

\smallskip
\noindent\emph{Case 2: \(G=Q_3\).}
Only \(\ell=4\) occurs in the half range.  If \(e\) and \(f\) have the same
direction, their fixed binary strings differ in another coordinate; the two
opposite faces in that coordinate are disjoint four-cycles containing \(e\)
and \(f\).  Otherwise their directions differ, and independence forces their
fixed bits in the third coordinate to differ; the two opposite faces in that
coordinate again give the required cover.

\smallskip
\noindent\emph{Case 3: \(n=2\).}
If both \(d_1,d_2\geq4\), apply
Lemma~\ref{lem:even-torus}.  If \(\{d_1,d_2\}=\{2,4\}\), then \(G\cong Q_3\),
which was handled above.  The case \(d_1=d_2=2\) contradicts \(N\geq8\),
and every remaining case is one of the excluded prisms.

Conversely, Example~\ref{ex:obstructions}(ii) shows that
\(K_2\BoxProd C_{2p}\), \(p\geq3\), fails the property at \(\ell=4\).
This proves the half-range characterization.  The full range follows by
applying the half-range result to \((f,e,N-\ell)\) and interchanging the two
cycles.
\end{proof}

\begin{remark}\label{rem:vertex-recovery}
In the range covered by Theorem~\ref{thm:main}, the edge-prescribed result
also recovers the corresponding vertex 2-DCC bipancyclicity of bipartite
generalized hypercubes \cite{NiuXuLai2021}.  Given distinct prescribed
vertices \(u,v\), choose independent incident edges \(e\) at \(u\) and \(f\)
at \(v\); such a choice exists because \(\delta(G)\ge3\).  Apply
Theorem~\ref{thm:main} to obtain complementary cycles containing \(e\) and
\(f\), and then forget the edge labels.
\end{remark}

Here \(\textsc{CylinderHamilton}(D,M,g)\) denotes the construction of
Lemma~\ref{lem:cylinder} with the boundary row supporting \(M\) relabelled
as row zero.  It returns a Hamilton cycle of \(D\) containing
\(M\cup\{g\}\).

\begin{breakablealgorithm}
\caption{Construct a prescribed two-disjoint-cycle cover}
\label{alg:constructive-2dcc}
\small
\begin{algorithmic}[1]
\Require Even factors $s,r\ge4$, coordinate representation of $T=C_s\BoxProd C_r$, independent edges $(e,f)$, and even $4\le\ell\le sr-4$.
\Ensure Vertex-disjoint cycles $J_1,J_2$ covering $V(T)$, with $|J_1|=\ell$, $e\in E(J_1)$, and $f\in E(J_2)$.
\If{$\ell>sr/2$}
  \State Swap $(e,f)$, replace $\ell$ by $sr-\ell$, and record the final swap of $J_1,J_2$.
\EndIf
\State Apply the factor-exchange and translation part of Table~\ref{tab:normalization}; update \(s,r\) after a factor exchange and record the resulting isomorphism \(\Theta\).
\If{$\ell\le s$} \Comment{Case 1}
  \State Apply the row-reflection part of the normalization with \(a=1\), and include it in \(\Theta\).
  \State $k\gets\ell/2$; construct the $2k$-cycle $J_1$ on rows $0,1$ and columns $0,\ldots,k-1$.
  \State Construct the complementary two-row cycle $K$ on columns $k,\ldots,s-1$, and set $D\gets T[V(T)\setminus V(J_1\cup K)]$.
  \If{$f=x_k^2x_{k+1}^2$}
    \State $M\gets \mathcal M_{r-1}(k)$ and $R\gets x_k^0x_{k+1}^0x_{k+1}^{r-1}x_k^{r-1}x_k^0$.
  \Else
    \State $M\gets \mathcal M_2(k)$ and $R\gets x_k^1x_{k+1}^1x_{k+1}^2x_k^2x_k^1$.
  \EndIf
  \State $H\gets\textsc{CylinderHamilton}(D,M,f)$; $J_2\gets(K\cup H)\symdiff R$.
\Else \Comment{Case 2}
  \State $a\gets\lceil\ell/s\rceil-1$, $q\gets(\ell-as)/2$; apply the row reflection with this \(a\), include it in \(\Theta\), and form \(L^-,L^+\).
  \State Choose \(c\) from Table~\ref{tab:case2-choice-c}, form \(\mathcal M_i(c)\) by \eqref{eq:matching-case2}, and form \(R_j^\pm\) by \eqref{eq:switches-case2}.
  \If{$a=1$ and form \textup{(V)} holds}
    \State $H^-\gets C_s^0$ and $H^+\gets\textsc{CylinderHamilton}(L^+,\mathcal M_2(c),f)$.
  \Else
    \State $H^-\gets\textsc{CylinderHamilton}(L^-,\mathcal M_{a-1}(c),e)$.
    \State $H^+\gets\textsc{CylinderHamilton}(L^+,\mathcal M_{a+1}(c),f)$.
  \EndIf
  \State $J_1\gets H^-\symdiff\left(\bigcup_{j=0}^{q-1}R_j^-\right)$.
  \State $J_2\gets H^+\symdiff\left(\bigcup_{j=q}^{s/2-1}R_j^+\right)$.
\EndIf
\State Apply $\Theta^{-1}$ to both cycles.
\If{the initial half-range swap was recorded}
  \State Swap $(J_1,J_2)$.
\EndIf
\State \Return $(J_1,J_2)$.
\end{algorithmic}
\end{breakablealgorithm}

Empty switch ranges in the last two assignments are interpreted as empty
unions; in particular, \(q=s/2\) gives \(J_2=H^+\).
Each vertex and each switch is processed a constant number of times, so
Algorithm~\ref{alg:constructive-2dcc} runs in $O(N)$ time and uses $O(N)$
space.

\section{Edge-specific consequences and applications}\label{sec:applications}

The classification in Theorem~\ref{thm:main} first specializes to even
\(k\)-ary \(n\)-cubes.  Combined with the Hamilton-cycle and switching
constructions used above, it also yields ordinary edge-bipancyclicity for every
bipartite generalized hypercube, including the exceptional prisms.

\begin{corollary}\label{cor:kary}
Let \(k\) be even and \(n\geq2\).  Except for \(Q_2^2\cong C_4\), the
\(k\)-ary \(n\)-cube \(Q_n^k\) is 2-DCC edge
\([4,|Q_n^k|/2]\)-bipancyclic. 
\end{corollary}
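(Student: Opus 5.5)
The plan is to obtain Corollary~\ref{cor:kary} as a direct specialization of Theorem~\ref{thm:main}, with \(Q_n^k=C(k,\ldots,k)\) (\(n\) factors, \(k\) even). Three things must be checked: the hypotheses of the theorem hold, the exceptional prisms never occur, and the order-four case is exactly \(Q_2^2\).

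First, the order. We have \(N=|Q_n^k|=k^n\). If \(k\ge4\), then \(N\ge16\). If \(k=2\), then \(Q_n^2=Q_n\) has order \(2^n\), which is at least \(8\) unless \(n=2\); in that case \(Q_2^2\cong C_4\) is precisely the excluded graph, and Theorem~\ref{thm:main} already records that it has no 2-DCC. Hence for every other pair \((k,n)\) the hypotheses \(n\ge2\), all \(d_i\) even, and \(N\ge8\) of Theorem~\ref{thm:main} are met.

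Second, the prisms. We must rule out \(Q_n^k\cong K_2\BoxProd C_{2p}\) with \(p\ge3\). The cleanest argument compares degrees. Each vertex of \(Q_n^k\) has degree \(n\) when \(k=2\) and \(2n\) when \(k\ge4\). By contrast, \(K_2\BoxProd C_{2p}\) is \(3\)-regular of order \(4p\ge12\).
\begin{itemize}
\item If \(k\ge4\), the degree \(2n\) is even, so it cannot equal \(3\).
\item If \(k=2\), degree \(3\) forces \(n=3\), so the graph is \(Q_3\) of order \(8<12\).
\end{itemize}
So no isomorphism is possible. (Alternatively, one could note that the prism contains \(C_{2p}\)-layers but no factor decomposition of the appropriate form; the degree and order count is simpler.)

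With both checks done, Theorem~\ref{thm:main} gives that \(Q_n^k\) is 2-DCC edge \([4,|Q_n^k|/2]\)-bipancyclic. There is no real obstacle here. The only care needed is the bookkeeping that the equality \(d_i=2\) corresponds to the factor \(K_2\), so that the \(k=2\) hypercubes are covered, and that \(Q_3\) (order \(8\)) is admitted by the theorem rather than excluded.
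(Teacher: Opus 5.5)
Your proposal is correct and follows the paper's route. It checks that \(Q_n^k=C(k,\ldots,k)\) meets the hypotheses of Theorem~\ref{thm:main} once \(Q_2^2\cong C_4\) is set aside, shows it is never an excluded prism, and then applies the theorem. Your regularity-and-order count justifies the non-isomorphism more explicitly than the paper's appeal to the factors being even cycles: the prism is \(3\)-regular of order \(4p\ge12\), while \(Q_n^k\) is \(n\)- or \(2n\)-regular, so only \(Q_3\), of order \(8\), could match.
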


\begin{proof}
The graph \(Q_n^k=C(k,\ldots,k)\) has order \(k^n\).  If \(k=2\) and
\(n\geq3\), it is not isomorphic to an excluded prism
\(K_2\BoxProd C_{2p}\) with \(p\geq3\); for \(n=3\), it is the smaller
prism \(K_2\BoxProd C_4\).  If \(k\geq4\), all product factors are even
cycles, so the graph is again outside the exceptional family.
Theorem~\ref{thm:main} applies.
\end{proof}

\begin{corollary}\label{cor:edge-bipancyclic}
Let \(n\geq2\), and let every \(d_i\) be even.  Then the bipartite
generalized hypercube \(G=C(d_1,\ldots,d_n)\) is edge-bipancyclic.
\end{corollary}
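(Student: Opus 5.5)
The plan is to prove Corollary~\ref{cor:edge-bipancyclic} by reducing each length to Theorem~\ref{thm:main} where possible, and to treat the remaining graphs and lengths directly. Fix an edge \(g\in E(G)\) and an even length \(4\le\ell\le N\).

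\emph{Hamilton length.} For \(\ell=N\), Lemma~\ref{lem:edge-hamiltonian} applies, since \(G\) is a connected Cayley graph of the abelian group \(\prod\Z_{d_i}\) of order at least \(4\). It gives a Hamilton cycle through \(g\).

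\emph{Non-exceptional graphs with \(\ell\le N-4\).} If \(N\ge8\) and \(G\not\cong K_2\BoxProd C_{2p}\) for every \(p\ge3\), I would pick an edge \(f\) independent of \(g\). Such an \(f\) exists because \(G\) has many edges that avoid the two ends of \(g\): an edge of a layer not containing either end works. Theorem~\ref{thm:main}, applied to the pair \((g,f)\) in its full range \(4\le\ell\le N-4\), yields a cycle \(J_1\) of length \(\ell\) containing \(g\). The length \(\ell=N-2\) does not occur in this range and must be handled separately. Finally, if \(N=4\), then \(G\cong C_4\) and the claim is trivial.

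\emph{The gap \(\ell=N-2\).} I would use a switching argument on a Hamilton cycle. Write \(G\) as a spanning subgraph \(F_1\BoxProd G'\) with \(G'=F_2\BoxProd\cdots\BoxProd F_n\), or use the prism structure \(C_s\BoxProd P_2\) (rows \(X^0,X^1\)) that sits inside every case. Take a Hamilton cycle \(H\) containing \(g\) of the cylinder form \(H_c\) from Lemma~\ref{lem:cylinder}. Then look for a four-cycle \(R\) that shares exactly three edges with \(H\), and replace that three-edge path by its chord. This deletes two vertices and gives length \(N-2\). One must choose \(R\) so that \(g\) is not among the three deleted edges. In the cylinder construction, the untouched rows contain long runs of consecutive horizontal edges paired with vertical edges. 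There are many such "U-turn" four-cycles (for instance, at the ends of the rows \(C_s^{t-1}\)), and their positions shift with the choice of \(c\), so some U-turn avoids \(g\). I expect this step to be the main technical point: the vertex-removal switch is the inverse of the \(R_j\) insertion used in Lemma~\ref{lem:even-torus}, and the freedom in the choice of \(c\in\mathcal A_2\) should let \(g\) be avoided.

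\emph{Exceptional prisms \(K_2\BoxProd C_{2p}\), \(p\ge3\).} Label the rows \(x^0,x^1\), and take \(\ell=2k\) with \(2\le k\le p\); longer lengths are covered by the previous paragraphs. The cycle \(x_a^0x_{a+1}^0\cdots x_{a+k-1}^0x_{a+k-1}^1\cdots x_a^1x_a^0\) has length \(2k\) for every starting column \(a\). If \(g\) is a rung \(x_b^0x_b^1\), set \(a=b\). If \(g\) is a row edge \(x_b^ix_{b+1}^i\), set \(a=b\) as well. This gives every even length from \(4\) to \(2p+2\) containing \(g\), and the general two-row "ladder" cycles can also be taken of any even length up to \(4p-2\). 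Together with the Hamilton case, this completes the exceptional prisms.
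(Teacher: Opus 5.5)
You handle \(\ell=N\) via Lemma~\ref{lem:edge-hamiltonian}, \(4\le\ell\le N-4\) via Theorem~\ref{thm:main} with an auxiliary edge independent of \(g\), \(C_4\) as trivial, and the prisms \(K_2\BoxProd C_{2p}\) by two-row ladder cycles. All of this agrees with the paper. In the prism paragraph, ``longer lengths are covered by the previous paragraphs'' is false, since Theorem~\ref{thm:main} excludes exactly these graphs. This is harmless, because your closing remark (ladder cycles on up to \(2p-1\) columns) already gives every length \(4,\dots,4p-2\).

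\textbf{The gap.} The genuine gap is \(\ell=N-2\), the only step that is not a citation: you say some U-turn square ``should'' avoid \(g\), but you never produce one. Two ingredients are missing.
\begin{enumerate}
\item You need a \emph{spanning} cylinder \(C_s\BoxProd P_t\), with \(s\ge4\) even and \(t\ge2\), that contains \(g\). The prism \(C_s\BoxProd P_2\) you mention is not spanning in general: in \(C_6\BoxProd C_6\) every four-cycle is a unit square, so an embedded prism has parallel rungs and at most \(12\) vertices. A spanning cylinder does exist, though. If some \(d_i=2\), take \(K_2\BoxProd D'\), where \(D'\) is a Hamilton cycle of \(G_{[n]\setminus\{i\}}\) through the projection of \(g\) (Lemma~\ref{lem:edge-hamiltonian}). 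Otherwise take \(C_{d_1}\BoxProd D'\) in the same way and delete one seam of \(D'\)-direction edges that misses \(g\).
\item You must exhibit the square, and your restriction \(c\in\mathcal A_2\) works against you. It is needed in Lemma~\ref{lem:cylinder} only to retain \(M_1^0\), whereas \(H_c\) is a Hamilton cycle for every \(c\in\Z_s\).
\end{enumerate}

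\textbf{How to close it} (column indices mod \(s\)). For any \(c\):
\begin{itemize}
\item the rungs of \(H_c\) between rows \(0\) and \(1\) are exactly those at columns \(c\) and \(c+1\);
\item row \(0\) lacks only \(x_c^0x_{c+1}^0\);
\item row \(1\) lacks only \(x_c^1x_{c+1}^1\) and, if \(t\ge3\), \(x_{c+1}^1x_{c+2}^1\).
\end{itemize}
Hence the square \(Q_c=x_{c-1}^0x_c^0x_c^1x_{c-1}^1x_{c-1}^0\) meets \(H_c\) in precisely the three \(H_c\)-edges at \(x_c^0\) and \(x_c^1\). So \(H_c\symdiff Q_c\) is a cycle of length \(N-2\) on \(V(G)\setminus\{x_c^0,x_c^1\}\). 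It contains \(g\) if and only if \(g\in E(H_c)\) and \(g\) avoids \(x_c^0,x_c^1\).
\begin{itemize}
\item A vertical edge \(g=x_a^ix_a^{i+1}\) lies in \(H_c\) exactly when \(c\in\{a-i,a-i-1\}\), and at most \(c=a\) is forbidden, so \(c=a-i-1\) works.
\item A horizontal edge \(g\) rules out at most three values of \(c\), and \(s\ge4\).
\end{itemize}
The parity restriction breaks this choice. For \(g=x_a^0x_a^1\) with \(a\) odd, the only odd \(c\) with \(g\in E(H_c)\) is \(c=a\), and \(Q_a\) deletes \(g\).

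\textbf{Comparison with the paper.} Once completed, your route differs from the paper's. The paper normalizes \(e=x_0^0x_1^0\) in a spanning torus supplied by Lemma~\ref{lem:spanning-torus}. It joins the \((2s-2)\)-cycle on rows \(0,1\) that omits column \(s-1\) to a Hamilton cycle of the cylinder on rows \(2,\dots,r-1\) through \(x_0^2x_1^2\), using a single square, and it treats \(Q_3\) by hand. Your vertex-deletion switch needs only a spanning cylinder through \(g\), which Lemma~\ref{lem:edge-hamiltonian} provides uniformly, including for \(Q_3\). So it avoids both Lemma~\ref{lem:spanning-torus} and the separate \(Q_3\) case.
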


\begin{proof}
Put \(G=C(d_1,\ldots,d_n)\) and \(N=|G|\).  In our case,
\eqref{eq:cayley-representation} represents \(G\) as a connected Cayley
graph of the finite abelian group \(\prod_{i=1}^n\Z_{d_i}\) of order
\(N\geq4\).  Therefore Lemma~\ref{lem:edge-hamiltonian} shows that every edge of \(G\) lies on
a Hamilton cycle.  We consider the remaining even lengths.

First suppose that \(G\) satisfies Theorem~\ref{thm:main}.  Given
\(e=uv\in E(G)\), choose \(f=u'v'\in E(G)\) independent of \(e\).
Theorem~\ref{thm:main} then places \(e\) on a cycle of every even length from
\(4\) through \(N-4\).

We construct a cycle of length \(N-2\).  If \(G
\not\cong Q_3\), then
Lemma~\ref{lem:spanning-torus}, applied to \(e\) and the already chosen
\(f\), gives a spanning torus when \(n\geq3\).  When \(n=2\), both factors
have order at least four and \(G\) itself is such a spanning
\(T=C_s\BoxProd C_r\), \(s,r\geq4\), containing \(e\).  After interchanging
the two factors if necessary and applying a product automorphism of \(T\),
write \(e=x_0^0x_1^0\).  On rows zero and one, let
\[
 K=x_0^0x_1^0\cdots x_{s-2}^0
   x_{s-2}^1x_{s-3}^1\cdots x_0^1x_0^0.
\]
This is a \((2s-2)\)-cycle containing \(e\).  The rows
\(2,\ldots,r-1\) induce a cylinder \(D\).  Apply
Lemma~\ref{lem:cylinder} with \(\mathcal M_2(0)\), which contains
\(x_0^2x_1^2\); it gives a Hamilton cycle \(H\) containing this edge.
Symmetric difference of \(K\cup H\) with the square
\(x_0^1x_1^1x_1^2x_0^2x_0^1\)
joins \(K\) and \(H\) into one cycle, preserves \(e\), and has length
\((2s-2)+s(r-2)=sr-2=N-2\).

For \(Q_3\), edge transitivity reduces to
\(e=000\,100\), and \(000\,100\,110\,010\,011\,001\,000\) is a six-cycle containing \(e\).

It remains to handle the excluded prism
\(K_2\BoxProd C_s\), where \(s\geq6\) is even.  Write its two
\(C_s\)-layers as \(C_s^0\) and \(C_s^1\), with corresponding vertices
\(x_i^0\) and \(x_i^1\), where subscripts are taken modulo \(s\).  For an
arc \(P=x_ix_{i+1}\cdots x_{i+d}\) of \(C_s\) and \(h\in\{0,1\}\), write
\(P^h=x_i^hx_{i+1}^h\cdots x_{i+d}^h\).
If \(e=x_i^0x_i^1\) is an edge in the \(K_2\) direction, then for each
\(d\in\{1,\ldots,s-1\}\), let
\(P=x_ix_{i+1}\cdots x_{i+d}\) be the \(d\)-edge arc of \(C_s\).  The two
copies \(P^0,P^1\), together with the edges
\(x_i^0x_i^1=e\) and \(x_{i+d}^0x_{i+d}^1\), form a cycle of length
\(2d+2\) containing \(e\).  If \(e\) lies in \(C_s^\varepsilon\), choose a
\(d\)-edge arc \(P=x_ix_{i+1}\cdots x_{i+d}\) whose copy
\(P^\varepsilon\) contains \(e\).  Again
\(P^0\cup P^1\cup\{x_i^0x_i^1,x_{i+d}^0x_{i+d}^1\}\) is a cycle of
length \(2d+2\) containing \(e\).  As \(d=1,\ldots,s-1\), these
constructions give all even lengths \(4,\ldots,2s\).

Finally, \(C(2,2)\cong C_4\) is trivially
edge-bipancyclic.
\end{proof}

\section{Conclusion}\label{sec:conclusion}

We proved that, for \(n\ge2\), a bipartite generalized hypercube
\(G=C(d_1,\ldots,d_n)\) of order \(N\ge8\) is 2-DCC edge
\([4,N/2]\)-bipancyclic if and only if
\[
  G\not\cong K_2\BoxProd C_{2p}
  \qquad\text{for every integer }p\ge3.
\]
In every such nonexceptional graph, every ordered pair of independent
edges can be retained in complementary cycles for every even length
\(4\le\ell\le N-4\) (equivalently, for every length in the half-range
\([4,N/2]\) of Definition~\ref{def:edge-2dcc}, after ordering the two
cycles by length).  The order-four graph \(C(2,2)\cong C_4\) is separate:
it has no 2-DCC because it has only four vertices.

As a specialization, every even \(k\)-ary \(n\)-cube
\(Q_n^k=C(k,\ldots,k)\) with \(k\ge2\) and \(n\ge2\), except
\(Q_2^2\cong C_4\), has the edge-prescribed two-disjoint-cycle-cover
property.  Independently, Corollary~\ref{cor:edge-bipancyclic} gives ordinary
edge-bipancyclicity for every bipartite generalized hypercube with
\(n\ge2\), including the small and exceptional cases handled separately.


\begin{thebibliography}{99}\small

\bibitem{Alspach1981}
B. Alspach,
The search for long paths and cycles in vertex-transitive graphs and
digraphs,
in: K.L. McAvaney (Ed.), Combinatorial Mathematics VIII,
Lecture Notes in Mathematics, vol. 884, Springer, Berlin, 1981,
pp. 14--22.
\href{https://doi.org/10.1007/BFb0091804}{doi:10.1007/BFb0091804}.



\bibitem{JouppiEtAl2020}
N.P. Jouppi, D.H. Yoon, G. Kurian, S. Li, N. Patil, J. Laudon,
C. Young, D. Patterson,
A domain-specific supercomputer for training deep neural networks,
Commun. ACM 63(7) (2020) 67--78.
\href{https://doi.org/10.1145/3360307}{doi:10.1145/3360307}.

\bibitem{Dally1990}
W.J. Dally,
Performance analysis of \(k\)-ary \(n\)-cube interconnection networks,
IEEE Trans. Comput. 39(6) (1990) 775--785.
\href{https://doi.org/10.1109/12.53599}{doi:10.1109/12.53599}.

\bibitem{DoughertyFaber2003}
R. Dougherty, V. Faber,
The degree-diameter problem for several varieties of Cayley graphs, I. The Abelian case,
SIAM J. Discrete Math. 17(3) (2004) 478--519.
\href{https://doi.org/10.1137/S0895480100372899}{doi:10.1137/S0895480100372899}.

\bibitem{HaoEtAl2024}
R.-X. Hao, X.-W. Qin, H. Zhang, J.-M. Chang,
Two-disjoint-cycle-cover pancyclicity of data center networks,
Appl. Math. Comput. 475 (2024) 128716.
\href{https://doi.org/10.1016/j.amc.2024.128716}{doi:10.1016/j.amc.2024.128716}.

\bibitem{KungEtAl2021}
T.-L. Kung, H.-C. Chen, C.-H. Lin, L.-H. Hsu,
Three types of two-disjoint-cycle-cover pancyclicity and their
applications to cycle embedding in locally twisted cubes,
Comput. J. 64 (1) (2021) 27--37.
\href{https://doi.org/10.1093/comjnl/bxz134}{doi:10.1093/comjnl/bxz134}.

\bibitem{LiuWang2025}
Q. Liu, F. Wang,
Edge-bipancyclicity of hypercubes with faulty edges,
Theoret. Comput. Sci. 1056 (2025) 115503.
\href{https://doi.org/10.1016/j.tcs.2025.115503}{doi:10.1016/j.tcs.2025.115503}.

\bibitem{NiuXuLai2021}
R. Niu, M. Xu, H.-J. Lai,
Two-disjoint-cycle-cover vertex bipancyclicity of the bipartite
generalized hypercube,
Appl. Math. Comput. 400 (2021) 126090.
\href{https://doi.org/10.1016/j.amc.2021.126090}{doi:10.1016/j.amc.2021.126090}.

\bibitem{PatarasukYuan2009}
P. Patarasuk, X. Yuan,
Bandwidth optimal all-reduce algorithms for clusters of workstations,
J. Parallel Distrib. Comput. 69(2) (2009) 117--124.
\href{https://doi.org/10.1016/j.jpdc.2008.09.002}{doi:10.1016/j.jpdc.2008.09.002}.

\bibitem{WuSabir2023}
W. Wu, E. Sabir,
Embedding spanning disjoint cycles in hypercube networks with prescribed
edges in each cycle,
Axioms 12 (9) (2023) 861.
\href{https://doi.org/10.3390/axioms12090861}{doi:10.3390/axioms12090861}.

\bibitem{XueLuQiao2025}
S. Xue, Z.P. Lu, H. Qiao,
Two-disjoint-cycle-cover edge/vertex bipancyclicity of star graphs,
Discrete Appl. Math. 360 (2025) 196--208.
\href{https://doi.org/10.1016/j.dam.2024.09.004}{doi:10.1016/j.dam.2024.09.004}.

\end{thebibliography}
\end{document}